\newcommand{\calA}{\mathcal{A}}
\newcommand{\calD}{\mathcal{D}}
\newcommand{\calI}{\mathcal{I}}
\newcommand{\calK}{\mathcal{K}}
\newcommand{\calP}{\mathcal{P}}
\newcommand{\calT}{\mathcal{T}}
\newcommand{\calX}{\mathcal{X}}
\newcommand{\calY}{\mathcal{Y}}
\newcommand{\calZ}{\mathcal{Z}}
\newcommand{\bbF}{\mathbb{F}}
\newcommand{\bbN}{\mathbb{N}}
\newcommand{\bbZ}{\mathbb{Z}}
\newcommand{\frakR}{\mathfrak{R}}
\newcommand{\frakZ}{\mathfrak{Z}}
\newcommand{\bh}{\mathbf{h}}
\newcommand{\bv}{\mathbf{v}}
\let\Im=\undefined
\let\mod=\undefined
\DeclareMathOperator{\Id}{Id} %
\DeclareMathOperator{\Im}{Im} %
\DeclareMathOperator{\Hom}{Hom} %
\DeclareMathOperator{\Ker}{Ker} %
\DeclareMathOperator{\mod}{mod} %
\DeclareMathOperator{\nil}{nil} %
\DeclareMathOperator{\red}{red} %
\DeclareMathOperator{\chr}{char} %
\DeclareMathOperator{\proj}{proj} %
\DeclareMathOperator{\gldim}{gl.dim} %
\DeclareMathOperator{\hdim}{\mathbf{hdim}} %
\newcommand{\vertexD}[1]{\bullet \save*+!D{\scriptstyle #1} \restore}
\newcommand{\vertexL}[1]{\bullet \save*+!L{\scriptstyle #1} \restore}
\newcommand{\vertexU}[1]{\bullet \save*+!U{\scriptstyle #1} \restore}
\newcounter{claim}[section]
\renewcommand{\theclaim}{\thesection.\arabic{claim}}
\newtheorem{lemma}[claim]{Lemma}
\newtheorem{proposition}[claim]{Proposition}
\newtheorem{theorem}[claim]{Theorem}
\newtheorem*{theorems}{Theorem}
\title{The graded centers of derived discrete algebras}
\author{Grzegorz Bobi\'nski}
\address{Faculty of Mathematics and Computer Science \\ Nicolaus
Copernicus University \\ ul.~Chopina 12/18 \\ 87-100 Toru\'n \\
Poland}
\email{gregbob@mat.uni.torun.pl}
\date{}
\keywords{derived discrete algebras, the graded center of a
triangulated category}
\subjclass[2000]{16G20, 18E30}
\begin{document}

\begin{abstract}
We describe in the paper the graded centers of the derived
categories of the derived discrete algebras. In particular, we
prove that if $A$ is a derived discrete algebra, then the reduced
part of the graded center of the derived category of $A$ is
nontrivial if and only if $A$ has infinite global dimension.
Moreover, we show that the nilpotent part of the graded center is
controlled by the objects for which the Auslander--Reiten
translation coincides with a power of the suspension functor.
\end{abstract}

\maketitle

Throughout the paper $\bbF$ denotes a fixed algebraically closed
field. All considered categories are $\bbF$-categories and the
considered algebras and modules are finite dimensional over
$\bbF$. By $\bbZ$, $\bbN$, and $\bbN_+$, we denote the sets of
integers, nonnegative integers, and positive integers,
respectively. For $i, j \in \bbZ$, $[i, j] := \{ k \in \bbZ \mid i
\leq k \leq j \}$ (in particular, $[i, j] = \varnothing$ if $i >
j$). Moreover, $[i, \infty) := \{ k \in \bbZ \mid i \leq k \}$ and
$(-\infty, j] := \{ k \in \bbZ \mid k \leq j \}$.

\section*{Introduction}

For a triangulated category $\calT$ with the suspension functor
$\Sigma$ one defines the graded center $\frakZ (\calT)$ as the
graded ring which in degree $p \in \bbZ$ consists of all natural
transformations $\Id \to \Sigma^p$ which commute with $\Sigma$ up
to $(-1)^p$. In a series of papers~\cites{KessarLinckelmann2007,
Linckelmann2007, LinckelmannStancu2008} by Linckelmann, Kessar,
and Stancu, this notion has been proved useful in many situations,
when studying representations of finite groups. Moreover, Krause
and Ye~\cite{KrauseYe2008} studied the graded centers for some
classes of triangulated categories appearing in the representation
theory of finite dimensional algebras.

An important homological invariant of an algebra $A$ is the
derived category $\calD^b (A)$ of its module category. This
category has a structure of a triangulated category, thus it is
natural to study its graded center, which we denote by $\frakZ
(A)$. The algebras with the easiest to understand derived
categories are the derived discrete algebras described by
Vossieck~\cite{Vossieck2001}. Our aim in this paper is to
calculate the graded centers for the derived discrete algebras.

The precise description of the graded centers in the non-trivial
cases can be found in Section~\ref{sect_main}. We formulate here
only some consequences of this description. For a graded
commutative ring $R$ we denote by $R_{\nil}$ the ideal of the
nilpotent elements of $R$ and we put $R_{\red} := R / R_{\nil}$.
Moreover, given an algebra $A$ we denote by $\tau$ the
Auslander--Reiten translation in $\calD^b (A)$.

\begin{theorems}
Let $A$ be a derived discrete algebra and $R := \frakZ (A)$.
\begin{enumerate}

\item
$R_{\red} = \bbF$ if and only if $\gldim A < \infty$.

\item
$R_{\nil} \neq 0$ if and only if there exists an object $X$ in
$\calD^b (A)$ such that $\tau X \simeq \Sigma^p X$ for some $p \in
\bbZ$.

\end{enumerate}
\end{theorems}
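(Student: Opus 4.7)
The plan is to derive both assertions as corollaries of the explicit description of $\frakZ(A)$ given in Section~\ref{sect_main}, combined with Vossieck's classification of derived discrete algebras. That classification splits such $A$ into two families: (a) algebras derived equivalent to the path algebra of a Dynkin quiver, which all have finite global dimension, and (b) a family of one--cycle gentle algebras, whose global dimension is infinite outside of one distinguished subcase. A further structural ingredient I would need is that $\gldim A = \infty$ is equivalent to the existence of an indecomposable object $X \in \calD^b(A)$ with $\tau X \simeq \Sigma^p X$ for some $p \in \bbZ$; this equivalence is the logical bridge that makes (1) and (2) run in parallel.

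For (1), in the piecewise hereditary case the shape of the AR--quiver of $\calD^b(\bbF Q)$ should force any homogeneous element of positive degree in $\frakZ(A)$ to vanish on some generator and hence to be nilpotent, while any degree--zero central element must act by a single scalar by connectedness of the AR--quiver, yielding $R_{\red} = \bbF$. In the infinite global dimension case, the computation in Section~\ref{sect_main} should produce by inspection a non--nilpotent homogeneous element, built from a coherent scalar action on the infinite family of AR--components that carry the $(\tau,\Sigma)$--periodic objects.

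For (2), the ``if'' direction is constructive: given $X$ with $\tau X \simeq \Sigma^p X$, its $\tau$--orbit modulo shift is finite and supports a natural transformation, again extracted from the description in Section~\ref{sect_main}, which is automatically nilpotent because it vanishes on the rest of $\calD^b(A)$. The converse is the main obstacle: one must argue that every nonzero nilpotent homogeneous element of $\frakZ(A)$ vanishes on a cofinite family of indecomposables, and that compatibility with AR--triangles pins its support down to $(\tau,\Sigma)$--periodic orbits. Controlling how a central natural transformation propagates along the unbounded $\bbZ \bbA_\infty^\infty$ components of the AR--quiver, and isolating the periodic sub--components as the only possible support, is where most of the work will lie.
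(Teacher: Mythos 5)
Your overall skeleton --- reduce via Vossieck's classification, the Krause--Ye theorem, and the derived equivalences of Bobi\'nski--Geiss--Skowro\'nski to the normal forms $\Lambda(r,n,m)$, compute $\frakZ$ explicitly there, and read off both statements --- is exactly the paper's route. But the ``structural ingredient'' you build the argument on is false: for derived discrete algebras, $\gldim A = \infty$ is \emph{not} equivalent to the existence of an indecomposable $X$ with $\tau X \simeq \Sigma^p X$. In the paper's normal forms, $\gldim \Lambda(r,n,m) = \infty$ holds precisely when $r = n$, whereas a $(\tau,\Sigma)$-periodic indecomposable exists precisely when $r = n-1$ or $(r,m) = (1,0)$. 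These conditions cut out different subsets of $\Omega$: for example $\Lambda(1,2,0)$ has finite global dimension but carries periodic objects (and indeed $R_{\nil} \neq 0$ while $R_{\red} = \bbF$ there), while $\Lambda(2,2,0)$ has infinite global dimension and no periodic object (so $R \simeq \bbF[X^2]$ is reduced and $R_{\nil} = 0$). So statements (1) and (2) do not ``run in parallel''; they are governed by independent features of the triple $(r,n,m)$, and any proof that ties them together through the claimed equivalence will collapse.

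The same conflation infects your sketch of (1): in the infinite global dimension case the non-nilpotent element of positive degree is \emph{not} supported on $(\tau,\Sigma)$-periodic components. In Section~\ref{section_infinite} the element $\eta \in \frakZ_n'(\Lambda(n,n,m))$ is defined by the degree-zero arrows $f_{v, v+(n+m,n+m)}'^{(i)}$ on the $X$-components, whose objects are $(\tau,\Sigma)$-periodic only in the single case $(n,m)=(1,0)$; its powers stay nonzero because these arrows compose without obstruction, which has nothing to do with periodicity. Conversely, the nilpotent generators $\eta'^{(q)}$ and $\eta^{(q)}$ of Section~\ref{sect_calcul} live on the periodic components and have vanishing products because of the relations killing compositions of degree-$2$ arrows --- note that ``vanishing on the rest of $\calD^b(A)$'' does not by itself force nilpotence, since a transformation supported on a $\Sigma$-orbit composes with its own shift inside that orbit. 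To repair the proposal you should drop the claimed equivalence, determine separately for which $(r,n,m)$ each of the two conditions holds, and match them against the case-by-case computation of $\frakZ(\Lambda(r,n,m))$; the hard analytic content (the Hom-space lemmas and the inductions that propagate the vanishing of a central transformation along the components) is the part your last paragraph correctly identifies but leaves open.
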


The paper is organized as follows. In Section~\ref{sect_KrauseYe}
we recall the theorem of Krause and Ye stating that, when
calculating $\frakZ (A)$ for an algebra $A$, we may replace the
derived category $\calD^b (A)$ by the homotopy category $\calK^b
(\proj A)$ of perfect complexes over $A$. Next, in
Section~\ref{sect_discrete} we collect information about the
derived discrete algebras. As a consequence it follows that we may
concentrate in our calculations on the one-cycle gentle algebras
not satisfying the clock condition. In
Section~\ref{section_category} we describe $\calK^b (\proj A)$ for
a given one-cycle gentle algebra $A$ not satisfying the clock
condition of finite global dimension. This description is used in
Section~\ref{sect_calcul} in order to calculate the graded centers
for the one-cycle gentle algebras not satisfying the clock
condition of finite global dimension. Next, in
Section~\ref{section_infinite} we study the case of the one-cycle
gentle algebras not satisfying the clock condition of infinite
global dimension. Finally, in Section~\ref{sect_main} we summarize
our calculations.

For background on the representation theory of algebras (including
the language of quivers) we refer
to~\cite{AssemSimsonSkowronski2006}.

The article was written while the author was staying at the
University of Bielefeld as an Alexander von Humboldt Foundation
fellow. The author also acknowledges the support from the Research
Grant No.\ N N201 269135 of the Polish Ministry of Science and
High Education.

\section{The graded center of a triangulated
category}~\label{sect_KrauseYe}

Let $\calT$ be a triangulated category with the suspension functor
$\Sigma$. By the graded center $\frakZ (\calT)$ of $\calT$ we mean
the $\bbZ$-graded abelian group $\bigoplus_{p \in \bbZ} \frakZ_p
(\calT)$, where, for $p \in \bbZ$, $\frakZ_p (\calT)$ consists of
the natural transformations $\eta : \Id \to \Sigma^p$ such that
$\eta_{\Sigma X} = (-1)^p \cdot \Sigma \eta_X$ for each object $X$
of $\calT$. If $\eta' \in \frakZ_p (\calT)$ and $\eta'' \in
\frakZ_q (\calT)$ for $p, q \in \bbZ$, then we define the product
$\eta' \circ \eta''$ of $\eta'$ and $\eta''$ by $(\eta' \circ
\eta'')_X := \Sigma \eta'_X \circ \eta''_X$ for an object $X$ of
$\calT$. In this way we give $\frakZ (\calT)$ a structure of a
graded commutative ring, where a $\bbZ$-graded ring $R =
\bigoplus_{p \in \bbZ} R_p$ is called graded commutative if $r_1
\cdot r_2 = (-1)^{p \cdot q} \cdot r_2 \cdot r_1$ for all $r_1 \in
R_p$, $r_2 \in R_q$, $p, q \in \bbZ$. For completeness, one may
also study ``a commutative version'' of the graded center, i.e.\
the ring $\frakZ' (\calT) = \bigoplus_{p \in \bbZ} \frakZ_p'
(\calT)$, such that, for $p \in \bbZ$, $\frakZ_p' (\calT)$
consists of the natural transformations $\eta : \Id \to \Sigma^p$
such that $\eta_{\Sigma X} = \Sigma \eta_X$ for each object $X$ of
$\calT$. Obviously, $\frakZ' (\calT)$ is a commutative ring graded
by $\bbZ$.

Let $\calA$ be an additive category. By $\calK^b (\calA)$ we
denote the bounded homotopy category of $\calA$ defined as
follows. The objects of $\calK^b (\calA)$ are the differential
complexes $X = (X^p, d_X^p)$ of objects of $\calA$ such that $X^p
= 0$ for all but finite $p \in \bbZ$. If $X$ and $Y$ are objects
of $\calK^b (\calA)$, then $\Hom_{\calK^b (\calA)} (X, Y)$
consists of the equivalence classes of the morphisms $X \to Y$ of
complexes modulo the null-homotopic maps. Recall, that if $X$ and
$Y$ are complexes, then a morphism $f : X \to Y$ of complexes is
given by morphisms $f^p : X^p \to Y^p$, $p \in \bbZ$, in $\calA$
such that $f^{p + 1} \circ d_X^p = d_Y^p \circ f^p$ for each $p
\in \bbZ$. Moreover, if $X$ and $Y$ are complexes, then a morphism
$f : X \to Y$ is called null-homotopic, if there exist morphisms
$h^p : X^p \to Y^{p - 1}$, $p \in \bbZ$, in $\calA$, such that
$f^p = d_Y^{p - 1} \circ h^p + h^{p + 1} \circ d_X^p$. It is known
(see for example~\cite{Happel1988}*{I.3.2}) that $\calK^b (\calA)$
has a structure of a triangulated category with the suspension
functor $\Sigma$ given by the shift of complexes, i.e.\ if $p \in
\bbZ$, then $(\Sigma X)^p := X^{p + 1}$ and $d_{\Sigma X}^p := -
d_X^{p + 1}$ for an object $X$ of $\calK^b (\calA)$ and $(\Sigma
f)^p := f^{p + 1}$ for a morphism $f$ in $\calK^b (\calA)$.

Now assume that $\calA$ is an abelian category. Then for an object
$X$ of $\calK^b (\calA)$ and $p \in \bbZ$ we define the $p$-th
cohomology group $H^p (X)$ of $X$ by $H^p (X) := \Ker d_X^p / \Im
d_X^{p - 1}$. If $f \in \Hom_{\calK^b (\calA)} (X, Y)$ and $p \in
\bbZ$, then $f$ induces the map $H^p (f) : H^p (X) \to H^p (Y)$.
If $f \in \Hom_{\calK^b (\calA)} (X, Y)$, then $f$ is called a
quasi-isomorphism provided $H^p (f)$ is an isomorphism for each $p
\in \bbZ$. The derived category $\calD^b (\calA)$ of $\calA$ is by
definition the localization of $\calK^b (\calA)$ with respect to
the quasi-isomorphisms~\cite{Verdier1977}. It follows that
$\calD^b (\calA)$ has a structure of a triangulated category with
the suspension functor induced by the suspension functor in
$\calK^b (\calA)$.

The following theorem proved in~\cite{KrauseYe2008} will be a
useful tool in our calculations.

\begin{theorem}[Krause/Ye]
Let $\calA$ be an abelian category with enough projective objects.
Then $\frakZ (\calD^b (\calA))$ is positively graded. Moreover, if
$\calP$ is the full subcategory of $\calA$ consisting of the
projective objects, then $\frakZ (\calD^b (\calA)) \simeq \frakZ
(\calK^b (\calP))$.
\end{theorem}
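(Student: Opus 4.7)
The plan is to exhibit the restriction map and then show it is an isomorphism, with positivity falling out along the way. The inclusion $\calK^b (\calP) \hookrightarrow \calD^b (\calA)$ (via $\calK^b (\calA)$ and the localization functor) is a fully faithful triangle functor since projective resolutions compute $\Hom$-spaces. Restricting a natural transformation $\eta : \Id \to \Sigma^p$ on $\calD^b (\calA)$ to complexes of projectives yields a homomorphism of graded rings
\[
\Phi : \frakZ (\calD^b (\calA)) \longrightarrow \frakZ (\calK^b (\calP)),
\]
which is manifestly compatible with suspension and products.

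For the positivity statement, I would first note that for $X \in \calA$ regarded as a stalk complex, $\Hom_{\calD^b (\calA)} (X, \Sigma^p X) = \Ext_\calA^p (X, X)$ vanishes for $p < 0$. Hence $\eta_X = 0$ for every $X \in \calA$ and, by equivariance under $\Sigma$, for every stalk complex. For an arbitrary bounded $Y$, I would induct on the length of the interval containing the cohomology using the canonical truncation triangle $\tau_{\leq k} Y \to Y \to \tau_{> k} Y \to \Sigma \tau_{\leq k} Y$: naturality of $\eta$ on this triangle expresses $\eta_Y$ in terms of $\eta$ applied to the shorter pieces, and a small Hom-vanishing argument in the triangulated category handles the connecting term.

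For the isomorphism, the key input is that every $X$ in $\calD^b (\calA)$ admits a (possibly unbounded above) projective resolution $P \to X$ in $\calK^{-, b} (\calP)$, and that for each $N$ the brutal truncation $\sigma_{\geq -N} P$ lives in $\calK^b (\calP)$. Given $\eta \in \frakZ_p (\calK^b (\calP))$ with $p \geq 0$, I would define $\widetilde{\eta}_X$ as follows: choose $N$ large relative to the cohomological support of $X$ and $p$, so that the canonical morphism $\sigma_{\geq -N} P \to P$ induces a bijection on $\Hom ( - , \Sigma^p P)$ (this is a standard Hom-vanishing after cutting below the support), and transport $\eta_{\sigma_{\geq -N} P}$ through this bijection. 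One then verifies independence of $N$ (compare two truncations directly), independence of the resolution (projective resolutions are unique up to homotopy equivalence in $\calK^{-, b} (\calP)$), naturality in $X$ (lift maps to morphisms of resolutions), and the anticommutation with $\Sigma$ (inherited from $\eta$). Injectivity of $\Phi$ follows by the same device: if $\Phi (\eta) = 0$, then evaluating $\eta$ on $P$ through a sufficiently large bounded truncation forces $\eta_X = 0$.

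The main obstacle will be the extension step: one must ensure that the truncation trick genuinely produces a well-defined $\widetilde{\eta}_X$ independent of all auxiliary choices, and this is exactly where the positivity of $p$ is essential — for $p < 0$ the required Hom-bijection fails, which is consistent with the vanishing established above. The positivity part of the theorem and the isomorphism part thus share a single technical core, namely controlling $\Hom_{\calD^b (\calA)} (P, \Sigma^p Q)$ under brutal truncation of complexes of projectives.
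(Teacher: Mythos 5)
The paper does not actually prove this statement---it is quoted from the Krause--Ye preprint---so your argument can only be measured against the standard proof there. Your overall architecture is the right one and matches that source: restrict along the fully faithful embedding $\calK^b (\calP) \hookrightarrow \calD^b (\calA)$ to get $\Phi$, and extend a transformation from perfect complexes to all of $\calD^b (\calA)$ by evaluating on large brutal truncations $\sigma_{\geq -N} P$ of a projective resolution, using Hom-vanishing against the discarded tail; injectivity of $\Phi$ follows by the same device. You also correctly identify the well-definedness and naturality checks as the technical core.

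There is, however, a genuine gap in the positivity step. From the canonical truncation triangle $\tau_{\leq k} Y \to Y \to \tau_{> k} Y \to \Sigma \tau_{\leq k} Y$ with $\eta$ vanishing on the two outer terms, naturality only yields that $\eta_Y$ factors through a map $\tau_{> k} Y \to \Sigma^p Y$, and to kill it you need $\Hom (\tau_{> k} Y, \Sigma^p \tau_{\leq k} Y) = 0$ alongside $\Hom (\tau_{> k} Y, \Sigma^p \tau_{> k} Y) = 0$. For $p = -1$ and $\tau_{> k} Y$ a stalk in degree $k + 1$, the first group contains $\Hom_{\calA} (H^{k + 1} (Y), H^k (Y))$, which has no reason to vanish; the same failure occurs whenever $|p|$ is small compared to the cohomological width of $Y$. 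So ``a small Hom-vanishing argument handles the connecting term'' is precisely the step that breaks. The repair is to induct on the number of nonzero components of a complex $X$ representing $Y$ using the brutal truncation triangle $\sigma_{\geq s + 1} X \to X \to \Sigma^{-s} X^s \to \Sigma \sigma_{\geq s + 1} X$ that splits off the lowest term: the quotient is a stalk $M$ in degree $s$, lying in $\calD^{\leq s}$, while $\Sigma^p \sigma_{\geq s + 1} X$ lies in $\calD^{\geq s + 1 - p} \subseteq \calD^{\geq s + 1}$ for $p \leq 0$, so both required Hom-groups vanish by $t$-structure orthogonality together with $\operatorname{Ext}^{< 0} (M, M) = 0$. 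Relatedly, your closing remark misplaces where positivity enters: the bijection $\Hom (P, \Sigma^p P) \simeq \Hom (\sigma_{\geq -N} P, \Sigma^p P)$ holds for \emph{every} $p$ once $N$ is large relative to $p$ and the cohomological support, so it does not ``fail for $p < 0$''; nonnegativity comes from the vanishing of $\eta$ in negative degrees on (truncations of) complexes, not from a breakdown of the extension mechanism.
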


We also have the following variant of the above theorem, whose
proof is almost identical.

\begin{theorem}
Let $\calA$ be an abelian category with enough projective objects.
Then $\frakZ' (\calD^b (\calA))$ is positively graded. Moreover,
if $\calP$ is the full subcategory of $\calA$ consisting of the
projective objects, then $\frakZ' (\calD^b (\calA)) \simeq \frakZ'
(\calK^b (\calP))$.
\end{theorem}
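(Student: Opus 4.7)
The plan is to transcribe the proof of the Krause--Ye theorem recalled above, replacing the sign $(-1)^p$ by $+1$ at every occurrence of the $\Sigma$-compatibility relation. The data entering $\frakZ$ and $\frakZ'$ differ only in that one scalar, and no step of the original argument uses its particular value beyond its being a fixed constant (depending only on $p$); hence the modification is purely cosmetic. I would first construct the ring isomorphism $\frakZ'(\calD^b(\calA)) \simeq \frakZ'(\calK^b(\calP))$ and then deduce positive gradedness by computing on the right-hand side.

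For the isomorphism, I would analyse the restriction map along the fully faithful triangulated embedding $\calK^b(\calP) \hookrightarrow \calD^b(\calA)$. Given $X \in \calD^b(\calA)$, fix a bounded-above projective resolution $P^\bullet \to X$ and set $Q_n := \sigma^{\geq -n} P^\bullet$, a perfect complex. The distinguished triangle $Q_n \to X \to Z_n \to \Sigma Q_n$ has $Z_n$ with cohomology concentrated in degrees $< -n$, so for $n$ sufficiently large (depending on the amplitude of $X$ and on $p$) both $\Hom_{\calD^b (\calA)}(Z_n, \Sigma^p X)$ and $\Hom_{\calD^b (\calA)}(\Sigma^{-1} Z_n, \Sigma^p X)$ vanish, forcing the restriction $\Hom_{\calD^b (\calA)}(X, \Sigma^p X) \to \Hom_{\calD^b (\calA)}(Q_n, \Sigma^p X)$ to be an isomorphism. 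This at once gives injectivity of the restriction on graded centers and a canonical extension: for $\tilde\eta$ on $\calK^b(\calP)$, let $\eta_X$ be the unique preimage of $\Sigma^p (Q_n \to X) \circ \tilde\eta_{Q_n}$.

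For positive gradedness, the isomorphism just constructed reduces the question to $\calK^b(\calP)$. I would fix $p < 0$ and $\tilde\eta \in \frakZ'_p(\calK^b(\calP))$, and induct on the amplitude of $X \in \calK^b(\calP)$. For $X$ a shift of a single projective, $\tilde\eta_X$ lies in a Hom group between complexes concentrated in distinct degrees, hence is zero. In the inductive step, use the stupid-truncation triangle $\sigma^{\geq a+1} X \to X \to X^a[-a] \to \Sigma \sigma^{\geq a+1} X$: the two outer components of $\tilde\eta$ vanish by the inductive hypothesis, and chasing naturality through both naturality squares forces $\tilde\eta_X$ to be of the form $\Sigma^p u \circ \gamma \circ v$ with $\gamma \in \Hom_{\calK^b(\calP)}(X^a[-a], \Sigma^p \sigma^{\geq a+1} X)$; for $p < 0$ this Hom group vanishes because the target has no non-zero term in degree $a$.

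The main obstacle, just as in the original Krause--Ye argument, is the verification that the extension $\eta_X$ defined above is well defined (independent of $P^\bullet$ and $n$), natural in $X$, and satisfies $\eta_{\Sigma X} = \Sigma \eta_X$ (without any sign). These all reduce to functoriality of the projective resolution and the $\Sigma$-equivariance of the Hom isomorphism above; because the only relation being checked is a scalar one, the adaptation from the $(-1)^p$-case to the $+1$-case introduces no new difficulty, which is precisely the sense in which the author calls the proof ``almost identical.''
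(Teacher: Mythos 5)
Your proposal is correct and takes exactly the route the paper intends: the paper offers no argument beyond the remark that the proof is ``almost identical'' to that of the Krause--Ye theorem, and your plan of transcribing that proof with the sign $(-1)^p$ replaced by $+1$ (together with a reasonable reconstruction of the truncation and induction-on-amplitude arguments) is precisely what is meant. The only place where your sketch is thinner than it should be is the factorization $\tilde\eta_X=\Sigma^p u\circ\gamma\circ v$ in the induction step, which needs the extra observation that the obstruction to correcting $\gamma'$ lies in $\Hom(X^a[-a],\Sigma^p X^a[-a])=0$ for $p\neq 0$; this is a routine point and does not affect the verdict.
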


We will apply the above theorems in the situation when $\calA$ is
the category $\mod A$ of modules over an algebra $A$ and,
consequently, $\calP$ is the full subcategory $\proj A$ of $\mod
A$ formed by the projective modules. In the above situation we
will write just $\frakZ (A)$ ($\frakZ' (A)$) instead of $\frakZ
(\calD^b (\mod A))$ and $\frakZ (\calK^b (\proj A))$ ($\frakZ'
(\calD^b (\mod A))$ and $\frakZ' (\calK^b (\proj A))$,
respectively).

\section{Derived discrete algebras} \label{sect_discrete}

We begin this section with the definition of gentle algebras. Let
$(Q, \frakR)$ be a pair consisting of a finite connected quiver
(i.e.\ directed graph) $Q = (Q_0, Q_1)$, where $Q_0$ and $Q_1$ are
the sets of vertices and arrows in $Q$, respectively, and a set
$\frakR$ of paths in $Q$. We say that $(Q, \frakR)$ is a gentle
quiver if the following conditions are satisfied:
\begin{enumerate}

\item
for each $x \in Q_0$ there are at most two $\alpha \in Q_1$ such
that $s \alpha = x$ (i.e.\ $\alpha$ starts at $x$) and at most two
$\beta \in Q_1$ such that $t \beta = x$ (i.e.\ $\beta$ terminates
at $x$),

\item
$\frakR$ consists of paths of length $2$,

\item
for each $\alpha \in Q_1$ there is at most one $\beta \in Q_1$
such that $s \beta = t \alpha$ and $\beta \alpha \not \in \frakR$,
and at most one $\gamma \in Q_1$ such that $t \gamma = s \alpha$
and $\alpha \gamma \not \in \frakR$,

\item
for each $\alpha \in Q_1$ there is at most one $\beta \in Q_1$
such that $s \beta = t \alpha$ and $\beta \alpha \in \frakR$, and
at most one $\gamma \in Q_1$ such that $t \gamma = s \alpha$ and
$\alpha \gamma \in \frakR$.

\end{enumerate}
If, in addition, the number of arrows in $Q$ equals the number of
vertices in $Q$, then we say that $(Q, \frakR)$ is a one-cycle
gentle quiver.

Let $(Q, \frakR)$ be a one-cycle gentle quiver. We say that
$\alpha \in Q_1$ is a cycle arrow if the quiver $(Q_0, Q_1
\setminus \{ \alpha \})$ is connected. Let $Q_1'$ and $Q_1''$ be
the sets of clockwise and anti-clockwise oriented cycle arrows,
respectively (we leave to the reader to formulate the formal
definition of these notions). We say that $(Q, \frakR)$ satisfies
the clock condition if the number of $\alpha \beta \in \frakR$
such that $\alpha, \beta \in Q_1'$ equals the number of $\alpha
\beta \in \frakR$ such that $\alpha, \beta \in Q_1''$. Note that
this condition is obviously independent on the choice of
orientation.

An algebra $A$ is called gentle one-cycle not satisfying the clock
condition if $A \simeq \bbF Q / \langle \frakR \rangle$ for a
one-cycle gentle quiver $(Q, \frakR)$ not satisfying the clock
condition. Here, for a quiver $Q$ we denote by $\bbF Q$ the path
algebra of $Q$. Moreover, if $Q$ is a quiver and $\frakR$ is a set
of paths in $Q$, then $\langle \frakR \rangle$ denotes the ideal
in $\bbF Q$ generated by $\frakR$.

Let $A$ be an algebra. For a complex $X$ of $A$-modules we define
its cohomology dimension vector $\hdim X \in \bbN^{\bbZ}$ by
$(\hdim X)_p := \dim_{\bbF} H^p (X)$, $p \in \bbZ$. We say that
$A$ is derived discrete if for each $\bh \in \bbN^{\bbZ}$ the
indecomposable objects in $\calD^b (\mod A)$ with the cohomology
dimension vector $\bh$ form only a finite number of the
isomorphism classes.

Examples of derived discrete algebras are the hereditary algebras
of Dynkin type. Vossieck proved~\cite{Vossieck2001} that a
connected algebra $A$ is derived discrete if and only if either
$A$ is derived equivalent to a hereditary algebra of Dynkin type
(i.e.\ $\calD^b (\mod A)$ is equivalent as a triangulated category
to $\calD^b (\mod B)$ for a hereditary algebra $B$ of Dynkin type)
or $A$ is Morita equivalent to a one-cycle gentle algebra which
does not satisfy the clock condition. One easily verifies that
$\frakZ (A) = \bbF$ and $\frakZ' (A) = \bbF$ if $A$ is a
hereditary algebra of Dynkin type (see also~\cite{KrauseYe2008}).
Consequently, we concentrate in our paper on describing the graded
centers for the one-cycle gentle algebras which does not satisfy
the clock condition.

For $n \in \bbN_+$ and $m \in \bbN$ let $\Delta (n, m)$ be the
following quiver
\[
\vcenter{\xymatrix{%
& & & & & \vertexD{1} \ar@{-}@/^/[]+R;[rd]+U &
\\
\vertexU{-m} \ar[r]^{\alpha_{-m}} & \vertexU{- m + 1} \ar@{-}[r] &
\cdots \ar[r] & \vertexU{-1} \ar[r]_{\alpha_{-1}} & \vertexL{0}
\ar@/^/[]+U;[ru]+L^{\alpha_0} & & \vdots \ar@/^/[]+D;[ld]+R
\\
& & & & & \vertexU{n - 1} \ar@/^/[]+L;[lu]+D^{\alpha_{n - 1}} &}}.
\]
Next, for $(r, n) \in \bbN_+^2$ such that $r \in [1, n]$ we put
\[
\frakR (r, n) := \{ \alpha_{i + 1} \alpha_i \mid i \in [n - r, n -
2] \} \cup \{ \alpha_0 \alpha_{n - 1} \}.
\]
Let $\Omega$ denote the set of triples $(r, n, m) \in \bbN^3$ such
that $n \in \bbN_+$ and $r \in [1, n]$. For $(r, n, m) \in \Omega$
we put $\Lambda (r, n, m) := \bbF \Delta (n, m) / \langle \frakR
(r, n) \rangle$. It is easy to check that $(\Delta (n, m), \frakR
(r, n))$ is a one-cycle gentle quiver not satisfying the clock
condition for each $(r, n, m) \in \Omega$. It was proved
in~\cite{BobinskiGeissSkowronski2004} that if $A$ is a one-cycle
gentle algebra not satisfying the clock condition, then there
exists $(r, n, m) \in \Omega$ such that $A$ and $\Lambda (r, n,
m)$ are derived equivalent. Consequently, it is sufficient to
describe the graded centers for the algebras of the above form.

We end this section with the following remark. Let $(Q, \frakR)$
be a one-cycle gentle quiver not satisfying the clock condition.
Then one can determine $(r, n, m) \in \Omega$ such that $\bbF Q /
\langle \frakR \rangle$ and $\Lambda (r, n, m)$ are derived
equivalent using the invariant introduced by Avella-Alaminos and
Geiss~\cite{AvellaAlaminosGeiss2008}.

\section{Category} \label{section_category}

Throughout this section we fix $(r, n, m) \in \Omega$ such that $r
< n$ and we put $\Lambda := \Lambda (r, n, m)$. We remark that the
condition $r < n$ implies that $\gldim \Lambda < \infty$. In this
section we describe a quiver $\Gamma$ and relations such that the
full subcategory of the indecomposable objects in $\calK^b (\proj
\Lambda)$ is equivalent to the path category of $\Gamma$ modulo
the given relations. This description follows from calculations
made in~\cite{BobinskiGeissSkowronski2004} (we also refer
to~\cites{BekkertMerklen2003, Bobinski2009}).

First, for $i \in [0, r - 1]$ we put $I_i := \bbZ^2$,
\begin{gather*}
I_i' := \{ (a, b) \in \bbZ^2 \mid a \leq b + \delta_{i, 0} \cdot m
\},
\\
\intertext{and} %
I_i'' := \{ (a, b) \in \bbZ^2 \mid a + \delta_{i, 0} \cdot n \leq
b \},
\end{gather*}
where $\delta_{x, y}$ is the Kronecker delta. Then the vertices of
$\Gamma$ are $X_v^{(i)}$ for $i \in [0, r - 1]$ and $v \in I_i'$,
$Y_v^{(i)}$ for $i \in [0, r - 1]$ and $v \in I_i''$, and
$Z_v^{(i)}$ for $i \in [0, r - 1]$ and $v \in I_i$.

Now we describe the arrows in $\Gamma$. Moreover, in order to be
able to describe the relations in a compact way we associate to
each arrow an additional data, which we call the degree of an
arrow.

First fix $i \in [0, r - 1]$ and $v = (a, b) \in I_i'$. We put
\begin{gather*}
\calI_v'^{(i)} := [a, b + \delta_{i, 0} \cdot m ] \times [b,
\infty), \qquad \calX_v^{(i)} := [a, b + \delta_{i, 0} \cdot m]
\times \bbZ,
\\
\intertext{and} %
\calX_v'^{(i)} := (-\infty, a + \delta_{i, r - 1} \cdot m] \times
[a, b + \delta_{i, 0} \cdot m].
\end{gather*}
For $u \in \calI_v'^{(i)}$, $u \neq v$, there is an arrow $f_{v,
u}'^{(i)} : X_v^{(i)} \to X_u^{(i)}$ of degree $0$. Next, for $u
\in \calX_v^{(i)}$ there is an arrow $g_{v, u}'^{(i)} : X_v^{(i)}
\to Z_u^{(i)}$ of degree $1$. Finally, for $u \in \calX_v'^{(i)}$
there is an arrow $e_{v, u}'^{(i)} : X_v^{(i)} \to X_u^{(i + 1)}$
of degree $2$, where we always change the upper index modulo $r$.

Now fix $i \in [0, r - 1]$ and $v := (a, b) \in I_i''$. We put
\begin{gather*}
\calI_v''^{(i)} := [a, b - \delta_{i, 0} \cdot n] \times [b,
\infty), \qquad \calY_v^{(i)} := \bbZ \times [a, b - \delta_{i, 0}
\cdot n],
\\
\intertext{and} %
\calY_v''^{(i)} := (-\infty, a - \delta_{i, r - 1} \cdot n] \times
[a, b - \delta_{i, 0} \cdot n].
\end{gather*}
For $u \in \calI_v'^{(i)}$, $u \neq v$, there is an arrow $f_{v,
u}''^{(i)} : Y_v^{(i)} \to Y_u^{(i)}$ of degree $0$. Next, for $u
\in \calY_v^{(i)}$ there is an arrow $g_{v, u}''^{(i)} : Y_v^{(i)}
\to Z_u^{(i)}$ of degree $1$. Finally, for $u \in \calY_v''^{(i)}$
there is an arrow $e_{v, u}''^{(i)} : Y_v^{(i)} \to Y_u^{(i + 1)}$
of degree $2$.

Finally fix $i \in [0, r - 1]$ and $v := (a, b) \in I_i$. We put
\begin{gather*}
\calI_v^{(i)} := [a, \infty) \times [b, \infty),
\\
\calZ_v'^{(i)} := (-\infty, a + \delta_{i, r - 1} \cdot m] \times
[a, \infty),
\\
\calZ_v''^{(i)} := (-\infty, b - \delta_{i, r - 1} \cdot n] \times
[b, \infty),
\\
\intertext{and} %
\calZ_v^{(i)} := (-\infty, a + \delta_{i, r - 1} \cdot m] \times
(\infty, b - \delta_{i, r - 1} \cdot n].
\end{gather*}
For $u \in \calI_v^{(i)}$, $u \neq v$, there is an arrow $f_{v,
u}^{(i)} : Z_v^{(i)} \to Z_u^{(i)}$ of degree $0$. Next, for $u
\in \calZ_v'^{(i)}$ there is an arrow $h_{v, u}'^{(i)} : Z_v^{(i)}
\to X_u^{(i + 1)}$ of degree $1$. Similarly, for $u \in
\calZ_v''^{(i)}$ there is an arrow $h_{v, u}''^{(i)} : Z_v^{(i)}
\to Y_u^{(i + 1)}$ of degree $1$. Finally, for $u \in
\calZ_v^{(i)}$ there is an arrow $e_{v, u}''^{(i)} : Z_v^{(i)} \to
Z_u^{(i + 1)}$ of degree $2$.

Now we describe the relations. Let $f : X \to Y$ and $g : Y \to Z$
be arrows of degree $p$ and $q$, respectively. If there is an
arrow $h : X \to Z$ of degree $p + q$, then we have the relation
$g f = h$, otherwise we have the relation $g f = 0$ (we note that
some of the relations are described directly in
Section~\ref{section_infinite}).

We also introduce the following notation
\begin{gather*}
f_{v, v}'^{(i)} := \Id_{X_v^{(i)}}, \; v \in I_i', \qquad f_{v,
v}''^{(i)} := \Id_{Y_v^{(i)}}, \; v \in I_i'',
\\
\intertext{and} %
f_{v, v}^{(i)} := \Id_{Z_v^{(i)}}, \; v \in I_i, \,
\end{gather*}
for $i \in [0, r - 1]$.

Now we describe the shift $\Sigma$. First, for $i \in [0, r - 1]$
we put
\begin{gather*}
\bv_i' := (1 + \delta_{i, r - 1} \cdot m, 1 + \delta_{i, 0} \cdot
m), \qquad \bv_i'' := (1 - \delta_{i, r - 1} \cdot n, 1 -
\delta_{i, 0} \cdot n),
\\
\intertext{and} %
\bv_i := (1 + \delta_{i, r - 1} \cdot m, 1 - \delta_{i, r - 1}
\cdot n).
\end{gather*}
If $i \in [0, r - 1]$, then $\Sigma X_v^{(i)} = X_{v + \bv_i'}^{(i
+ 1)}$ for each $v \in I_i'$, $\Sigma Y_v^{(i)} = Y_{v +
\bv_i''}^{(i + 1)}$ for each $v \in I_i''$, and $\Sigma Z_v^{(i)}
= Z_{v + \bv_i}^{(i + 1)}$ for each $v \in I_i''$. We leave it to
the reader to write the obvious formulas for the action of the
shift on the morphisms.

Finally, we describe the action of the Auslander--Reiten
translation $\tau$. Namely, if $i \in [0, r - 1]$, then $\tau
X_v^{(i)} = X_{v - (1, 1)}^{(i)}$ for each $v \in I_i'$, $\tau
Y_v^{(i)} = Y_{v - (1, 1) }^{(i)}$ for each $v \in I_i''$, and
$\tau Z_v^{(i)} = Z_{v - (1, 1)}^{(i)}$ for each $v \in I_i''$.
Observe that it follows by direct calculations that there exists
an indecomposable object $X$ in $\calK^b (\proj \Lambda)$ such
that $\tau X = \Sigma^p X$ for some $p \in \bbZ$ if and only if
either $r = n - 1$ or $r = 1$ and $m = 0$.

\section{Calculations} \label{sect_calcul}

Throughout this section we fix $(r, n, m) \in \Omega$ such that $r
< n$ and we put $\Lambda := \Lambda (r, n, m)$. In this section we
calculate $\frakZ (\Lambda)$ and $\frakZ (\Lambda')$.

First we describe the homomorphism spaces between the
indecomposable objects in $\calK^b (\proj \Lambda)$ and their
shifts.

\begin{lemma} \label{lemma_Zp}
Let $i \in [0, r - 1]$ and $v \in I_i$. If $p \in \bbN$, then
\[
\Hom_{\calK^b (\proj \Lambda)} (Z_v^{(i)}, \Sigma^p Z_v^{(i)}) =
\begin{cases}
\bbF \cdot f_{v, v}^{(i)} & \text{$p = 0$},
\\
0 & \text{$p > 0$}.
\end{cases}
\]
\end{lemma}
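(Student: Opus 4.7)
The plan is to reduce the computation of the Hom-space to an inspection of which arrows in the quiver $\Gamma$ described in Section~\ref{section_category} start at $Z_v^{(i)}$ and end at $\Sigma^p Z_v^{(i)}$. The key observation is that every defining relation equates a composition $g \circ f$ either with a single arrow of degree $\deg g + \deg f$, when such an arrow exists between the appropriate vertices, or else with zero; by induction on length, every composition reduces modulo the relations to a single arrow or to zero, so every morphism in the category is a linear combination of single arrows. Inspection of the arrow set emanating from a $Z$-type vertex shows that there is at most one arrow starting at $Z_v^{(i)}$ and ending at any fixed other vertex, so $\Hom(Z_v^{(i)}, A)$ is at most one-dimensional for each indecomposable $A$, spanned by the unique arrow $Z_v^{(i)} \to A$ when such an arrow exists (and by the identity when $A = Z_v^{(i)}$).

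For $p = 0$ the target is $Z_v^{(i)}$ itself, whose only self-arrow is the identity $f_{v, v}^{(i)}$; this settles the first case.

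For $p > 0$ I first iterate the shift to obtain $\Sigma^p Z_v^{(i)} = Z_w^{(j)}$ with $j \equiv i + p \pmod{r}$ and $w = v + (p + K m, p - K n)$, where $K$ is the number of indices in $\{i, i + 1, \ldots, i + p - 1\}$ whose residue modulo $r$ equals $r - 1$. Since the $Z$-type targets of arrows out of $Z_v^{(i)}$ lie either in copy $i$ (via the degree-$0$ arrows $f_{v, u}^{(i)}$, requiring $u \in \calI_v^{(i)}$) or in copy $(i + 1) \bmod r$ (via the degree-$2$ arrows $e_{v, u}''^{(i)}$, requiring $u \in \calZ_v^{(i)}$), the Hom vanishes automatically if $j \notin \{i, (i + 1) \bmod r\}$. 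When $j = i$ we have $r \mid p$ and $K = p / r$, so the second coordinate of $w - v$ equals $(p / r)(r - n)$, which is strictly negative by the hypothesis $r < n$ and contradicts the membership of $w$ in $\calI_v^{(i)}$. When $j = (i + 1) \bmod r$ we have $p \equiv 1 \pmod{r}$, and a direct count gives $K \geq \delta_{i, r - 1}$, so the first coordinate of $w - v$ is $p + K m \geq p + \delta_{i, r - 1} m > \delta_{i, r - 1} m$ since $p \geq 1$, violating $w \in \calZ_v^{(i)}$.

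The main obstacle will be the combinatorial bookkeeping of $K$ as a function of $p$ and $i$, in particular the separation of the sub-cases $i < r - 1$ and $i = r - 1$, together with the treatment of $p \geq r$ requiring complete revolutions around the copies. Once this bookkeeping is in place the required inequalities follow at once from $r < n$ and $p \geq 1$.
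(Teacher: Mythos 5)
Your proposal is correct and follows essentially the same route as the paper's proof: both compute $\Sigma^p Z_v^{(i)}$ explicitly (your case split by the residue copy $j$ is exactly the paper's split according to whether $r$ divides $p$ or $p-1$), and both reduce the question to checking that the shifted index fails to lie in $\calI_v^{(i)}$ (using $r<n$) or in $\calZ_v^{(i)}$ (using $p\geq 1$), including the overlap of the two subcases when $r=1$. You merely fill in the bookkeeping of the displacement vector that the paper leaves to the reader.
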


\begin{proof}
Put $Z := Z_v^{(i)}$. The claim is obvious if neither $p - 1$ nor
$p$ is divisible by $r$.

First assume that $r > 1$ and $p = q r + 1$ for some $q \in \bbN$.
Then $\Sigma^p Z = Z_{v + q (r + m, r - n) + \bv_i}^{(i + 1)}$ and
one easily verifies that $v + q (r + m, r - n) + \bv_i \not \in
\calZ_v^{(i)}$.

Now assume that $p = q r$ for some $q \in \bbN$. In this case
$\Sigma^p Z = Z_{v + q (r + m, r - n)}^{(i)}$. We leave it to the
reader to verify that $v + q (r + m, r - n) \in \calI_v^{(i)}$ if
and only if $q = 0$. Finally, one also shows that $v + q (r + m, r
- n) \not \in \calZ_v^{(0)}$ provided $r = 1$.
\end{proof}

\begin{lemma} \label{lemma_Xp}
Let $i \in [0, r - 1]$ and $v = (a, b) \in I_i'$. If $p \in
\bbN_+$, then
\begin{multline*}
\Hom_{\calK^b (\proj \Lambda)} (X_v^{(i)}, \Sigma^p X_v^{(i)}) =
\\
\begin{cases}
\bbF \cdot f_{v, v + \frac{p}{r} (r + m, r + m)}'^{(i)} & \text{$r
\mid p$ and $\frac{p}{r} \leq \frac{b + \delta_{i, 0} \cdot m -
a}{r + m}$},
\\
0 & \text{otherwise}.
\end{cases}
\end{multline*}
Moreover,
\[
\Hom_{\calK^b (\proj \Lambda)} (X_v^{(i)}, X_v^{(i)}) =
\begin{cases}
\bbF \cdot f_{v, v}'^{(i)} + \bbF \cdot e_{v, v}'^{(i)} & \text{$r
= 1$ and $a \leq b$}.
\\
\bbF \cdot f_{v, v}'^{(i)} & \text{otherwise}.
\\
\end{cases}
\]
\end{lemma}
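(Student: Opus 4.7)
The plan is to directly exploit the description of $\calK^b(\proj \Lambda)$ as a path category modulo relations built up in Section~\ref{section_category}. Since the relations stipulate that any composable pair of arrows collapses to a single arrow of the sum of degrees or to zero, every path in $\Gamma$ reduces by induction on its length to either a single arrow or zero. Consequently a basis for $\Hom_{\calK^b(\proj \Lambda)}$ between two indecomposable objects is given by the set of arrows between them in $\Gamma$, together with the identity when source and target coincide. The whole computation therefore reduces to a combinatorial enumeration of arrows.

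First I would identify the target. Iterating $\Sigma X_w^{(j)} = X_{w + \bv_j'}^{(j+1)}$ and using the telescoping identity $\sum_{j=0}^{r-1} \bv_j' = (r+m, r+m)$, one obtains $\Sigma^{qr} X_v^{(i)} = X_{v + q(r+m, r+m)}^{(i)}$, while for $p = qr + s$ with $0 < s < r$ the target lies over the upper index $(i+s) \bmod r$ with an additional partial-sum displacement. Because the only arrows out of $X_v^{(i)}$ landing on an $X$-vertex are the degree $0$ arrows $f_{v,u}'^{(i)} \colon X_v^{(i)} \to X_u^{(i)}$ and the degree $2$ arrows $e_{v,u}'^{(i)} \colon X_v^{(i)} \to X_u^{(i+1)}$, any nontrivial contribution to the Hom space forces $p \equiv 0$ or $p \equiv 1 \pmod r$.

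For $p \in \bbN_+$ with $r \mid p$, setting $q := p/r \geq 1$, the candidate arrow $f_{v,u}'^{(i)}$ with $u = v + q(r+m, r+m)$ exists iff $u \in \calI_v'^{(i)} = [a, b + \delta_{i,0} m] \times [b, \infty)$; the inequality on the second coordinate is automatic, and the first reduces exactly to $q(r+m) \leq b + \delta_{i,0} m - a$, which is the stated condition. For $r > 1$ and $p = qr + 1$ (the only other residue that could contribute), the target lies over upper index $i + 1$ with first coordinate $a + q(r+m) + 1 + \delta_{i,r-1} m$, which strictly exceeds the upper bound $a + \delta_{i,r-1} m$ on the first coordinate of $\calX_v'^{(i)}$; hence no $e$-arrow exists. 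For the remaining residues of $p$ modulo $r$ there is no arrow of either type with the correct upper-index target, so the Hom space vanishes.

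For the endomorphism case the identity $f_{v,v}'^{(i)}$ is always present, and the only other possible basis element is a self-loop $e_{v,v}'^{(i)} \colon X_v^{(i)} \to X_v^{(i+1)}$, which is a genuine self-loop precisely when $r = 1$; in that case the existence condition $v \in \calX_v'^{(0)} = (-\infty, a+m] \times [a, b+m]$ reduces to $a \leq b$, yielding the extra summand. The main, essentially bookkeeping, obstacle throughout the argument is to correctly track the Kronecker $\delta$-corrections at the boundary indices $i = 0$ and $i = r-1$ when matching coordinates of the various shifted targets against the defining inequalities of $\calI_v'^{(i)}$ and $\calX_v'^{(i)}$.
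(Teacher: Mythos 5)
Your proposal follows exactly the route the paper intends: the paper's own proof of this lemma is just ``analogous to Lemma~\ref{lemma_Zp}'', i.e.\ compute the vertex of $\Sigma^p X_v^{(i)}$ from the shift formula and test membership in the index sets $\calI_v'^{(i)}$ and $\calX_v'^{(i)}$, using that Hom spaces between indecomposables are spanned by the arrows of $\Gamma$. Your identification $\Sigma^{qr} X_v^{(i)} = X_{v+q(r+m,r+m)}^{(i)}$, the reduction of the $f$-arrow condition to $q(r+m) \le b + \delta_{i,0} m - a$, and the treatment of the endomorphism ring are all correct.

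One case is left unaddressed: for $r = 1$ and $p \in \bbN_+$ the classes $p \equiv 0$ and $p \equiv 1 \pmod r$ coincide, so besides the $f$-arrow you must also rule out an $e$-arrow $e_{v,u}'^{(0)}$ with $u = v + p(1+m,1+m)$ (for $r=1$ these are self-maps of the same upper index, which is exactly why the $p=0$ endomorphism ring can be two-dimensional). Your exclusion of $e$-arrows is stated only for $r > 1$, and your catch-all ``remaining residues'' clause is vacuous when $r = 1$. The gap is harmless --- the same inequality you already used applies: the first coordinate $a + p(1+m)$ exceeds the bound $a + m$ of $\calX_v'^{(0)}$ as soon as $p \ge 1$ --- but it should be said explicitly, just as the proof of Lemma~\ref{lemma_Zp} ends with a separate sentence handling $r = 1$.
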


\begin{proof}
The method of the proof is analogous to that of the proof of the
previous lemma, hence we leave it to the reader.
\end{proof}

\begin{lemma} \label{lemma_Yp}
Let $i \in [0, r - 1]$ and $v = (a, b) \in I_i''$. If $p \in
\bbN$, then
\begin{multline*}
\Hom_{\calK^b (\proj \Lambda)} (Y_v^{(i)}, \Sigma^p Y_v^{(i)}) =
\\
\begin{cases}
\bbF \cdot f_{v, v}''^{(i)} & \text{$p = 0$},
\\
\bbF \cdot e_{v, v + \frac{p - 1}{r} (r - n, r - n) +
\bv_i''}''^{(i)} & \text{$r \mid p - 1$ and}
\\
& \quad \text{$\frac{1}{n - r} \leq \frac{p - 1}{r} \leq \frac{b +
1 - a - \delta_{i, 0} \cdot n}{n - r}$},
\\
0 & \text{otherwise}.
\end{cases}
\end{multline*}
\end{lemma}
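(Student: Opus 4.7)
The plan is to follow closely the proof of Lemma~\ref{lemma_Zp}. First I would observe that, by the composition rule described in Section~\ref{section_category} (every product of two arrows equals a single arrow of the summed degree, or zero), every composable product in the path category collapses to a single arrow or to zero; hence $\Hom_{\calK^b(\proj\Lambda)}(Y_v^{(i)},\Sigma^p Y_v^{(i)})$ is spanned by the single arrows from $Y_v^{(i)}$ to $\Sigma^p Y_v^{(i)}$ in $\Gamma$, together with the identity if the source and the target coincide. Among the outgoing arrows of $Y_v^{(i)}$, only the degree-$0$ arrows $f_{v,\cdot}''^{(i)}$ (landing in upper index $i$) and the degree-$2$ arrows $e_{v,\cdot}''^{(i)}$ (landing in upper index $i+1$) have $Y$-type targets, so these are the only candidates.

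Next I would compute $\Sigma^p Y_v^{(i)}$ by iterating $\Sigma Y_w^{(j)} = Y_{w+\bv_j''}^{(j+1)}$ and using $\sum_{k=0}^{r-1}\bv_k''=(r-n,r-n)$, obtaining
\[
\Sigma^{qr} Y_v^{(i)} = Y_{v+q(r-n,r-n)}^{(i)}, \qquad \Sigma^{qr+1} Y_v^{(i)} = Y_{v+q(r-n,r-n)+\bv_i''}^{(i+1)}.
\]
If $p \not\equiv 0, 1 \pmod{r}$ the upper index of $\Sigma^p Y_v^{(i)}$ is neither $i$ nor $i+1$, so no arrow can exist and the Hom space vanishes. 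The case $p = 0$ contributes only the identity $f_{v,v}''^{(i)}$.

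For $p = qr$ with $q \geq 1$ I would observe that the first coordinate of the candidate target $v + q(r-n,r-n)$ drops below $a$ (since $r<n$), so it lies outside $\calI_v''^{(i)}$ and no $f$ arrow exists. For $p = qr + 1$ the candidate target is $u := v + q(r-n,r-n) + \bv_i''$, and the existence of $e_{v,u}''^{(i)}$ amounts to $u \in \calY_v''^{(i)} = (-\infty, a-\delta_{i,r-1}n] \times [a, b-\delta_{i,0}n]$; translating the three defining inequalities and substituting $q = (p-1)/r$ produces exactly the bounds $\tfrac{1}{n-r} \leq \tfrac{p-1}{r} \leq \tfrac{b+1-a-\delta_{i,0}n}{n-r}$. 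When these hold, $e_{v,u}''^{(i)}$ spans a one-dimensional Hom space; otherwise it is zero.

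The only delicate point is the bookkeeping with the Kronecker deltas $\delta_{i,0}$ and $\delta_{i,r-1}$, which appear both in $\bv_i''$ and in the definitions of $\calI_v''^{(i)}$ and $\calY_v''^{(i)}$ and must be handled uniformly for all values of $i \in [0,r-1]$ (including the potentially coincident boundary cases $i = 0$ and $i = r-1$ when $r = 1$). Apart from this arithmetic, the argument is a direct variant of Lemma~\ref{lemma_Zp} and I expect no conceptual difficulty.
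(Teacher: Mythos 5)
Your proposal is correct and follows essentially the same route as the paper: the paper proves Lemma~\ref{lemma_Yp} by the same case analysis as Lemma~\ref{lemma_Zp} (splitting on $p \bmod r$, computing the vertex of $\Sigma^p Y_v^{(i)}$ via the translation vectors $\bv_k''$, and checking membership in $\calI_v''^{(i)}$ resp.\ $\calY_v''^{(i)}$), which is exactly what you do. Your translation of the inequalities defining $\calY_v''^{(i)}$ into the bounds $\tfrac{1}{n-r} \leq \tfrac{p-1}{r} \leq \tfrac{b+1-a-\delta_{i,0} n}{n-r}$ checks out, including the boundary case $r=1$ where both arrow types are a priori candidates.
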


\begin{proof}
Similar as above.
\end{proof}

Assume that $r = n - 1$ and fix $q \in \bbN$. Observe that in this
case $\tau Y_v^{(i)} = \Sigma^r Y_v^{(i)}$ for each $i \in [0, r -
1]$ and $v \in I_i''$. Moreover, for each $i \in [0, r - 1]$ and
$v = (a, b) \in I_i''$ such that $b - a = q + \delta_{i, 0} \cdot
n$, there exists unique $p \in \bbZ$ such that $Y_v^{(i)} =
\Sigma^p Y_{(0, n + q)}^{(0)}$. We put $\varepsilon (i, v) :=
(-1)^{n \cdot p}$ in the above situation. We define the natural
transformation $\eta'^{(q)} : \Id \to \Sigma^n$ by setting
$\eta_X'^{(q)} := \varepsilon (i, v) \cdot e_{v, v + \bv_i'' - (1,
1)}''^{(i)}$ if $X = Y_v^{(i)}$ for $i \in [0, r - 1]$ and $v =
(a, b) \in I_i''$ such that $b - a = q + \delta_{i, 0} \cdot n$,
and $\eta_X'^{(q)} := 0$ if $X$ is an indecomposable object of
$\calK^b (\proj \Lambda)$ not isomorphic to $Y_v^{(i)}$ for some
$i \in [0, r - 1]$ and $v = (a, b) \in I_i''$ such that $b - a = q
+ \delta_{i, 0} \cdot n$. One easily verifies that $\eta'^{(q)}
\in \frakZ_n (\Lambda)$.

\begin{proposition} \label{prop_fdp}
Let $p \in \bbN_+$. Then
\[
\frakZ_p (\Lambda) =
\begin{cases}
\prod_{q \in \bbN} \bbF \cdot \eta'^{(q)} & \text{$(r, p) = (n -
1, n)$},
\\
0 & \text{otherwise}.
\end{cases}
\]
\end{proposition}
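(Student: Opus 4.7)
The plan is to invoke the Krause--Ye theorem to work inside $\calK^b (\proj \Lambda)$. An element $\eta \in \frakZ_p (\Lambda)$ is determined by its components $\eta_X \in \Hom (X, \Sigma^p X)$ on the indecomposables $X_v^{(i)}, Y_v^{(i)}, Z_v^{(i)}$ classified in Section~\ref{section_category}, subject to the naturality squares along the arrows of $\Gamma$ and to the sign compatibility $\eta_{\Sigma X} = (-1)^p \cdot \Sigma \eta_X$. By Lemma~\ref{lemma_Zp}, $\eta_{Z_v^{(i)}} = 0$ whenever $p \geq 1$.

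I would first kill the $X$-components by testing naturality against the degree-$1$ arrows $g_{v, u}'^{(i)} \colon X_v^{(i)} \to Z_u^{(i)}$: since $\eta_{Z_u^{(i)}} = 0$, naturality yields $\Sigma^p g_{v, u}'^{(i)} \circ \eta_{X_v^{(i)}} = 0$. If $\Hom (X_v^{(i)}, \Sigma^p X_v^{(i)}) \neq 0$, then Lemma~\ref{lemma_Xp} presents $\eta_{X_v^{(i)}}$ as a scalar multiple of a unique $f'$-arrow, and the relations identify the left-hand composition with a $g'$-arrow of type $g_{v, u + (p/r)(r + m, r - n)}'^{(i)}$. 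An elementary check of the indexing set $\calX_v^{(i)}$ shows that the Hom-nonvanishing condition of Lemma~\ref{lemma_Xp} is precisely what guarantees the existence of $u$ making this arrow present in $\Gamma$; the scalar must therefore vanish, giving $\eta_{X_v^{(i)}} = 0$ in all cases.

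For the $Y$-components, Lemma~\ref{lemma_Yp} leaves $\eta_{Y_v^{(i)}} = 0$ unless $r \mid p - 1$ and $\delta := (p - 1)/r \geq 1/(n - r)$, in which case
\[
\eta_{Y_v^{(i)}} = c_v^{(i)} \cdot e_{v, v + \delta (r - n, r - n) + \bv_i''}''^{(i)}
\]
for a scalar $c_v^{(i)} \in \bbF$. Naturality along $f_{v, u}''^{(i)}$ forces $c_v^{(i)} = c_u^{(i)}$ precisely when the composed $e''$-arrow exists, i.e.\ when $u - v$ lies in a rectangle of side $\delta (n - r)$ in $\bbZ^2$; the sign condition relates $c_{v + \bv_i''}^{(i + 1)}$ to $(-1)^p c_v^{(i)}$, so that iterating around $\Sigma^n$ along each $\Sigma$-orbit reproduces exactly the sign $\varepsilon (i, v)$ used in the definition of $\eta'^{(q)}$. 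In the exceptional case $(r, p) = (n - 1, n)$ one has $\delta (n - r) = 1$, so the rectangle collapses to the point $u = v$; within each index $i$ the naturality constraint becomes vacuous, and the free data reduces to one scalar per $\Sigma$-orbit of $Y$-objects. Since these orbits are labelled by the $\Sigma$-invariant $q = b - a - \delta_{i, 0} \cdot n \in \bbN$, this yields $\frakZ_n (\Lambda) = \prod_{q \in \bbN} \bbF \cdot \eta'^{(q)}$.

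In the remaining case $r \mid p - 1$ with $(r, p) \neq (n - 1, n)$ one has $\delta (n - r) \geq 2$. Starting from any $v = (a, b) \in I_i''$ with $c_v^{(i)}$ possibly nonzero, translating $v$ to $(a + \delta (n - r) - 1, b)$ is a valid naturality step that preserves $c$ but decreases $q$ by $\delta (n - r) - 1 \geq 1$; iterating finitely often reaches a $u \in I_i''$ with $q < \delta (n - r) - 1$, at which point Lemma~\ref{lemma_Yp} gives $\Hom (Y_u^{(i)}, \Sigma^p Y_u^{(i)}) = 0$ and therefore $c_u^{(i)} = 0$, forcing $c_v^{(i)} = 0$. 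The main technical obstacle is exactly this lattice walk: one must verify carefully that the translations remain inside both the region $I_i''$ and the admissibility rectangle throughout, and keep track of the sign bookkeeping needed to identify the surviving scalars with the prescribed $\eta'^{(q)}$ in the exceptional case.
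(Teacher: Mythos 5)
Your proposal is correct and follows essentially the same route as the paper: kill the $Z$-components via Lemma~\ref{lemma_Zp}, kill the $X$-components by naturality against a $g'$-arrow into a $Z$-vertex, and eliminate the $Y$-components by a descent on the invariant $q = b - a - \delta_{i,0} \cdot n$ that terminates where Lemma~\ref{lemma_Yp} gives a zero Hom-space, leaving exactly one scalar per $\Sigma$-orbit when $(r,p) = (n-1,n)$. The only (immaterial) difference is the descent step for the $Y$-components: the paper inducts by decrementing the second coordinate by $1$, while you jump the first coordinate by $\delta(n-r)-1$; both land in the vanishing region and yield the same conclusion.
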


\begin{proof}
Fix $\eta \in \frakZ_p (\Lambda)$. Lemma~\ref{lemma_Zp} implies
that $\eta_{Z_v^{(i)}} = 0$ for each $v \in I_i$ and $i \in [0, r
- 1]$.

Now we show that $\eta_X = 0$ if $X = X_v^{(i)}$ for $v = (a, b)
\in I_i'$ and $i \in [0, r - 1]$. It follows from from
Lemma~\ref{lemma_Xp} that we may assume that $r \mid p$ and
$\frac{p}{r} \leq \frac{b + \delta_{i, 0} m - a}{r + m}$. In this
case $\Sigma^p X = X_u^{(i)}$ and $\eta_X = \lambda \cdot f_{v,
u}'^{(i)}$ for some $\lambda \in \bbF$, where $u := v +
\frac{p}{r} (r + m, r + m)$. Let $f := g_{v, v'}'^{(i)}$, where
$v' := (a, 0)$. Then $\Sigma^p f =  g_{u, u'}'^{(i)}$, where $u'
:= v' + \frac{p}{r} (r + m, r - n)$. Observe that $\Sigma^p f
\circ \eta_X = \lambda \cdot g_{v, u'}'^{(i)}$. On the other hand
$\eta_{Z_{v'}^{(i)}} \circ f = 0$, as we have already proved that
$\eta_{Z_{v'}^{(i)}} = 0$. Since $\Sigma^p f \circ \eta_X =
\eta_{Z_{v'}^{(i)}} \circ f$, it follows that $\lambda = 0$, and
hence $\eta_X = 0$.

Now assume that $(r, p) \neq (n - 1, n)$ and let $Y := Y_v^{(i)}$
for some $v = (a, b) \in I_i''$ and $i \in [0, r - 1]$. We prove
by induction on $b - a - \delta_{i, 0} \cdot n$ that $\eta_Y = 0$.
If $b = a + \delta_{i, 0} \cdot n$, then the claim follows from
Lemma~\ref{lemma_Yp} (due to the assumption $(r, p) \neq (n - 1,
n)$). Now assume that $b > a + \delta_{i, 0} \cdot n$.
Lemma~\ref{lemma_Yp} implies that we may assume that $r$ divides
$p - 1$ and $\frac{1}{n - r} \leq \frac{p - 1}{r} \leq \frac{b + 1
- a - \delta_{i, 0} \cdot n}{n - r}$. In this case $\Sigma^p Y =
Y_u^{(i + 1)}$ and $\eta_Y = \lambda \cdot e_{v, u}''^{(i)}$ for
some $\lambda \in \bbF$, where $u := v + \frac{p - 1}{r} (r - n, r
- n) + \bv_i''$. Let $v' := (a, b - 1)$ and $Y' := Y_{v'}^{(i)}$.
By the induction hypothesis $\eta_{Y'} = 0$, thus $\eta_Y \circ
f_{v', v}''^{(i)} = \Sigma^p f_{v', v}''^{(i)} \circ \eta_{Y'} =
0$. On the other hand, using once more the assumption $(r, p) \neq
(n - 1, n)$, we get $u \in \calY_{v'}''^{(i)}$, hence $\eta_Y
\circ f_{v', v}''^{(i)} = \lambda \cdot e_{v', u}''^{(i)}$ and the
claim follows.

Finally, assume that $(r, p) = (n - 1, n)$. In this case for each
$i \in [0, r - 1]$ and $v \in I_i''$ there exists $\lambda_{i, v}
\in \bbF$ such that $\eta_{Y_v^{(i)}} = \lambda_{i, v} \cdot e_{v,
v + \bv_i'' - (1, 1)}''^{(i)}$. Since $\eta$ commutes up to sign
with $\Sigma$, $\lambda_{i, v} = \varepsilon (i, v) \cdot
\lambda_{0, (0, n + b - a - \delta_{i, 0} \cdot n)}$ for each $i
\in [0, r - 1]$ and $v = (a, b) \in I_i''$, hence the claim
follows.
\end{proof}

Again assume that $r = n - 1$ and fix $q \in \bbN$. Similarly as
before we define the natural transformation $\eta''^{(q)} : \Id
\to \Sigma^n$ by $\eta_X''^{(q)} := e_{v, v + \bv_i'' - (1,
1)}''^{(i)}$ if $X = Y_v^{(i)}$ for $i \in [0, r - 1]$ and $v =
(a, b) \in I_i''$ such that $b - a = q + \delta_{i, 0} \cdot n$,
and $\eta_X'^{(q)} := 0$ if $X$ is an indecomposable object of
$\calK^b (\proj \Lambda)$ not isomorphic to $Y_v^{(i)}$ for some
$i \in [0, r - 1]$ and $v = (a, b) \in I_i''$ such that $b - a = q
+ \delta_{i, 0} \cdot n$. One easily verifies that $\eta''^{(q)}
\in \frakZ_n' (\Lambda)$.

We have the following variant of the above proposition, which is
proved analogously.

\addtocounter{claim}{-1}
\renewcommand{\theclaim}{\thesection.\arabic{claim}'}
\begin{proposition}
\pushQED{\qed} %
Let $p \in \bbN_+$. Then
\[
\frakZ_p (\Lambda) =
\begin{cases}
\prod_{q \in \bbN} \bbF \cdot \eta''^{(q)} & \text{$(r, p) = (n -
1, n)$},
\\
0 & \text{otherwise}.
\end{cases}
\qedhere
\]
\popQED %
\end{proposition}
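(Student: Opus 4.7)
The plan is to adapt the proof of Proposition~\ref{prop_fdp} essentially verbatim, as the paper itself notes that the argument is analogous. Fix $\eta$ in the graded component in question. The argument proceeds in four steps: vanishing on the $Z$-objects, vanishing on the $X$-objects, vanishing on the $Y$-objects off the diagonal, and a scalar parametrization in the exceptional case $(r, p) = (n - 1, n)$.

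Lemma~\ref{lemma_Zp} gives $\Hom_{\calK^b (\proj \Lambda)}(Z_v^{(i)}, \Sigma^p Z_v^{(i)}) = 0$ for $p > 0$, so $\eta_{Z_v^{(i)}} = 0$ for every $i \in [0, r - 1]$ and $v \in I_i$. For the $X$-objects, Lemma~\ref{lemma_Xp} reduces matters to the case $r \mid p$ and $p/r \leq (b + \delta_{i, 0} m - a)/(r + m)$; writing $\eta_{X_v^{(i)}} = \lambda \cdot f_{v, u}'^{(i)}$ and applying the naturality square to $g_{v, v'}'^{(i)} : X_v^{(i)} \to Z_{v'}^{(i)}$ with $v' = (a, 0)$ forces $\lambda = 0$ from the already established vanishing on $Z_{v'}^{(i)}$, exactly as in the proof of Proposition~\ref{prop_fdp}.

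For the $Y$-objects, assume first $(r, p) \neq (n - 1, n)$ and induct on $b - a - \delta_{i, 0} n$, where $v = (a, b)$. The base case $b = a + \delta_{i, 0} n$ is precisely the place where the excluded pair would otherwise allow a nontrivial contribution in Lemma~\ref{lemma_Yp}. For the inductive step, the naturality square applied to $f_{v', v}''^{(i)}$ with $v' = (a, b - 1)$ reduces to the already-handled case; that the target $u = v + \frac{p - 1}{r}(r - n, r - n) + \bv_i''$ lies in $\calY_{v'}''^{(i)}$ uses once more the assumption $(r, p) \neq (n - 1, n)$.

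In the exceptional case $(r, p) = (n - 1, n)$, Lemma~\ref{lemma_Yp} forces $\eta_{Y_v^{(i)}} = \lambda_{i, v} \cdot e_{v, v + \bv_i'' - (1, 1)}''^{(i)}$ for some $\lambda_{i, v} \in \bbF$, and the compatibility relation between $\eta$ and $\Sigma$ pins down each $\lambda_{i, v}$ in terms of $\lambda_{0, (0, n + b - a - \delta_{i, 0} n)}$, leaving exactly one free scalar per $q \in \bbN$, namely for $q = b - a - \delta_{i, 0} n$. The only real point of difference from the proof of Proposition~\ref{prop_fdp} lies in this last step: the sign factor $\varepsilon(i, v)$ that accumulated there under iterated application of $\Sigma$ is replaced here by the trivial factor governing the present commuting convention, and this is precisely what distinguishes the generators $\eta''^{(q)}$ from $\eta'^{(q)}$. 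The sign bookkeeping is therefore the only place where any care is needed, and once it is checked the description $\prod_{q \in \bbN} \bbF \cdot \eta''^{(q)}$ follows.
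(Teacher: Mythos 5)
Your proposal is correct and is exactly the argument the paper intends: the paper gives no separate proof for this variant, stating only that it "is proved analogously" to Proposition~\ref{prop_fdp}, and your adaptation follows that proof step for step. You also correctly isolate the single genuine difference, namely that the sign factor $\varepsilon(i, v)$ in the final parametrization step is replaced by $1$ because elements of $\frakZ'$ commute with $\Sigma$ on the nose, which is precisely why the generators are the $\eta''^{(q)}$ rather than the $\eta'^{(q)}$.
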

\renewcommand{\theclaim}{\thesection.\arabic{claim}}

Our next aim is to calculate $\frakZ_0 (\Lambda) = \frakZ_0'
(\Lambda)$. Let $\Id$ denote the natural transformation $\Id \to
\Id$ which associates to an object $X$ of $\calK^b (\proj
\Lambda)$ the identity map. Obviously $\Id \in \frakZ_0
(\Lambda)$.

Assume that $r = 1$ and $m = 0$, and fix $q \in \bbN$. Observe
that in this case $\tau X_v^{(0)} = \Sigma^{-1} X_v^{(0)}$ for
each $v \in I_0'$. We define the natural transformation
$\eta^{(q)} : \Id \to \Id$ by setting $\eta_X^{(q)} := e_{v,
v}'^{(0)}$ if $X = X_v^{(0)}$ for $v = (a, b) \in I_0'$ such that
$b - a = q$, and $\eta_X^{(q)} := 0$ if $X$ is an indecomposable
object of $\calK^b (\proj \Lambda)$ not isomorphic to $X_v^{(0)}$
for some $v = (a, b) \in I_0'$ such that $b - a = q$. One easily
verifies that $\eta^{(q)} \in \frakZ_0 (\Lambda)$.

\begin{proposition} \label{prop_zzero}
We have
\[
\frakZ_0 (\Lambda) =
\begin{cases}
\bbF \cdot \Id \oplus \prod_{q \in \bbN} \bbF \cdot \eta^{(q)} &
\text{$r = 1$ and $m = 0$},
\\
\bbF \cdot \Id & \text{otherwise}.
\end{cases}
\]
\end{proposition}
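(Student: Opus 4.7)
The plan is to combine the endomorphism descriptions of Lemmas~\ref{lemma_Zp}, \ref{lemma_Xp}, and \ref{lemma_Yp} (at $p = 0$) with naturality of $\eta \in \frakZ_0(\Lambda)$ along the arrows of $\Gamma$, together with the compatibility $\eta_{\Sigma W} = \Sigma \eta_W$ (no sign occurs at $p = 0$). From these lemmas, the space $\Hom_{\calK^b(\proj \Lambda)}(W, W)$ is one-dimensional for every indecomposable $W$, except that when $r = 1$ and $v = (a,b) \in I_0'$ satisfies $a \leq b$, the space $\Hom_{\calK^b(\proj \Lambda)}(X_v^{(0)}, X_v^{(0)})$ is spanned by $f_{v,v}'^{(0)} = \Id$ and $e_{v,v}'^{(0)}$. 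I would accordingly write
\[
\eta_{Z_v^{(i)}} = \lambda^Z_{i,v} \Id, \qquad \eta_{Y_v^{(i)}} = \lambda^Y_{i,v} \Id, \qquad \eta_{X_v^{(i)}} = \lambda^X_{i,v} \Id + \mu_v \cdot e_{v,v}'^{(0)},
\]
where the $\mu_v$-term is present only in the exceptional case.

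First I would collapse all of the $\lambda$-scalars to a single constant $\lambda \in \bbF$. Naturality along any nonzero degree-$0$ arrow (of the form $f_{v,u}'^{(i)}$, $f_{v,u}''^{(i)}$, or $f_{v,u}^{(i)}$) equates the $\lambda$-values at its two endpoints, and since the index sets $\calI_v'^{(i)}$, $\calI_v''^{(i)}$, $\calI_v^{(i)}$ are unbounded, finite chains of such arrows link all vertices of a fixed type and upper index. The degree-$1$ arrows $g$ and $h$ bridge the three families $X$, $Y$, $Z$, and the identity $\eta_{\Sigma W} = \Sigma \eta_W$ identifies scalars across different upper indices. Consequently $\eta$ equals $\lambda \cdot \Id$ plus the $\mu$-contribution.

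For $r > 1$ the $\mu$-contribution is absent and we are done. For $r = 1$ the crucial naturality identity is
\[
\mu_u \bigl( e_{u,u}'^{(0)} \circ f_{v,u}'^{(0)} \bigr) = \mu_v \bigl( f_{v,u}'^{(0)} \circ e_{v,v}'^{(0)} \bigr) \qquad (u \in \calI_v'^{(0)}),
\]
where, by the defining relations of $\Gamma$, each side is a scalar multiple of $e_{v,u}'^{(0)}$ or vanishes. If $m > 0$, the vertex $u = (b+1, b)$ lies in $\calI_v'^{(0)}$ and carries no $\mu_u$-term, while $u \in \calX_v'^{(0)}$ holds precisely when $b - a < m$; this forces $\mu_v = 0$ in that range, and iterating with $u = (a, b+1)$ (which yields $\mu_{(a,b+1)} = \mu_{(a,b)}$) propagates the vanishing to every $v$. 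If $m = 0$, a direct check shows $\calI_v'^{(0)} \cap \calX_v'^{(0)} = \{v\}$, so the displayed identity imposes no relation between distinct $\mu_v$'s; instead compatibility with $\Sigma$ (which on $I_0'$ acts by $(a,b) \mapsto (a+1, b+1)$) forces $\mu_{v + (1,1)} = \mu_v$, so $\mu_v$ depends only on $q := b - a \in \bbN$ and is otherwise arbitrary, yielding the factor $\prod_{q \in \bbN} \bbF \cdot \eta^{(q)}$.

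The main obstacle is the case $r = 1$, $m > 0$: one has to find a single $u \in \calI_v'^{(0)}$ for which $\mu_u$ is absent (so the left-hand side of the displayed identity vanishes) while $e_{v,u}'^{(0)}$ is simultaneously a nonzero arrow; the asymmetry caused by $\delta_{i,0} \cdot m$ appearing in the second coordinate of the defining conditions of $I_0'$ but not in the first is what makes this possible exactly when $m \geq 1$. A routine inspection of the composition relations and of the action of $\Sigma$ on arrows then shows that both $\Id$ and each $\eta^{(q)}$ indeed lie in $\frakZ_0(\Lambda)$.
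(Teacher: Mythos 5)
Your argument is correct and follows essentially the same route as the paper: use the $p=0$ endomorphism descriptions from Lemmas~\ref{lemma_Zp}--\ref{lemma_Yp}, collapse the identity-component scalars to a single $\lambda$ by naturality along the $f$-, $g$-, $h$-arrows together with $\Sigma$-compatibility, and then treat the extra $e_{v,v}'^{(0)}$-components in the $r=1$ case, where $\Sigma$-equivariance reduces them to a function of $q=b-a$ and a suitably chosen degree-$0$ arrow (your $u=(b+1,b)$) kills them when $m>0$. Your write-up is in fact a bit more explicit than the paper's at the $m>0$ step, which the paper only sketches by analogy with Proposition~\ref{prop_fdp}.
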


\begin{proof}
Fix $\eta \in \frakZ_0 (\Lambda)$. Let $\lambda \in \bbF$ be such
that $\eta_{Z_{(0, 0)}^{(0)}} = \lambda \cdot f_{(0, 0), (0,
0)}^{(0)}$.

First we show by induction on $i$ that for each $i \in [0, r - 1]$
there exists $v \in I_i$ such that $\eta_{Z_v^{(i)}} = \lambda
\cdot f_{v, v}^{(i)}$. This claim is obvious for $i = 0$, thus
assume that $i > 0$. Fix $u \in I_{i - 1}$ such that
$\eta_{Z_{u}^{(i - 1)}} = \lambda \cdot f_{u, u}^{(i - 1)}$. Put
$v := u + \bv_{i - 1} - (1, 1)$. Lemma~\ref{lemma_Zp} implies that
$\eta_{Z_v^{(i)}} = \mu \cdot f_{v, v}^{(i)}$ for some $\mu \in
\bbF$. Moreover, $\mu \cdot e_{u, v}^{(i - 1)} = \eta_{Z_v^{(i)}}
\circ e_{u, v}^{(i - 1)} = e_{u, v}^{(i - 1)} \circ
\eta_{Z_{u}^{(i - 1)}} = \lambda \cdot e_{u, v}^{(i - 1)}$, hence
$\mu = \lambda$.

Next we show that $\eta_{Z_v^{(i)}} = \lambda \cdot f_{v,
v}^{(i)}$ for each $i \in [0, r - 1]$ and $v \in I_i$. We proceed
in a similar way as above in the following steps. First, we fix $u
\in I_i$ such that $\eta_{Z_u^{(i)}} = \lambda \cdot f_{u,
u}^{(i)}$. Next we show the claim for all $v \in \calI_u^{(i)}$
(using $f_{u, v}^{(i)}$) and finally we prove it for arbitrary $v
\in I_i$ (using $f_{v, v'}^{(i)}$ for some $v' \in \calI_v^{(i)}
\cap \calI_u^{(i)}$).

Further we apply the same method and Lemma~\ref{lemma_Yp} in order
to show that $\eta_{Y_v^{(i)}} = \lambda \cdot f_{v, v}''^{(i)}$
for each $i \in [0, r - 1]$ and $v \in I_i''$. In this case we use
$g_{v, u}''^{(i)}$ for some $u \in \calY_v^{(i)}$. Moreover, we
use Lemma~\ref{lemma_Xp} in order to prove similarly that
$\eta_{X_v^{(i)}} = \lambda \cdot f_{v, v}'^{(i)}$ for each $i \in
[0, r - 1]$ and $v \in I_i'$ provided $r > 1$.

Finally assume that $r = 1$. The analogous arguments to those
presented above and Lemma~\ref{lemma_Xp} imply that for each $v
\in I_0'$ there exists $\lambda_v \in \bbF$ such that
$\eta_{X_v^{(0)}} = \lambda \cdot f_{v, v}'^{(0)} + \lambda_v
\cdot e_{v, v}'^{(0)}$. Observe that for each $v = (a, b) \in
I_0''$ there exists unique $p \in \bbZ$ such that $X_v^{(i)} =
\Sigma^p X_{(0, b - a)}^{(0)}$. Consequently, it follows that
$\lambda_v = \lambda_{(0, b - a)}$ for each $v = (a, b) \in I_0'$,
since $\eta$ and $\Sigma$. Finally, we prove by induction on $a
\in \bbN$ that $\lambda_{(0, a)} = 0$ if $m > 0$, proceeding
similarly as we did in the proof of Proposition~\ref{prop_fdp}.
\end{proof}

Finally, we describe the multiplication in $\frakZ (\Lambda)$ and
$\frakZ' (\Lambda)$.

\begin{proposition}
Let $q_1, q_2 \in \bbN$.
\begin{enumerate}

\item
If $r = n - 1$, then $\eta'^{(q_1)} \cdot \eta'^{(q_2)} = 0 =
\eta''^{(q_1)} \cdot \eta''^{(q_2)}$.

\item
If $r = 1$ and $m = 0$, then $\eta^{(q_1)} \cdot \eta^{(q_2)} =
0$.

\item
If $r = 1$, $n = 2$, and $m = 0$, then $\eta^{(q_1)} \cdot
\eta'^{(q_2)} = 0 = \eta^{(q_1)} \cdot \eta''^{(q_2)}$.

\end{enumerate}
\end{proposition}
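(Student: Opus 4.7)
The three clauses are of quite different flavors, and I would treat them separately. Clause~(1) reduces to a degree count: the product $\eta'^{(q_1)}\cdot\eta'^{(q_2)}$ lies in $\frakZ_{2n}(\Lambda)$, and since $n\in\bbN_+$ we have $2n\neq n$, so $(r,2n)\neq(n-1,n)$ and Proposition~\ref{prop_fdp} forces $\frakZ_{2n}(\Lambda)=0$. The same argument with the primed variant of that proposition handles $\eta''^{(q_1)}\cdot\eta''^{(q_2)}$. No computation inside the category is required here.

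For clause~(2) the product has degree $0$, and I would evaluate it on each indecomposable of $\calK^b(\proj\Lambda)$. Away from objects of the form $X_v^{(0)}$ with $v=(a,b)$ satisfying $b-a=q_2$, the factor $\eta^{(q_2)}$ already vanishes. On such an $X$, a non-zero product additionally requires $b-a=q_1$, forcing $q_1=q_2$, and the product there becomes $e_{v,v}'^{(0)}\circ e_{v,v}'^{(0)}$. This composition has total degree $4$, and by the relations of Section~\ref{section_category} must coincide with an arrow $X_v^{(0)}\to X_v^{(0)}$ of degree~$4$ or else vanish. A glance at the arrow inventory of $\Gamma$ shows that arrows out of an $X$-vertex carry only degrees $0$, $1$ and $2$, so no degree-$4$ arrow exists and the composition is zero. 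This little verification is the only genuine obstacle in the proposition.

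Clause~(3) becomes a support-and-type argument: $\eta^{(q_1)}$ is supported on $X$-type indecomposables, whereas $\eta'^{(q_2)}$ and $\eta''^{(q_2)}$ are supported on $Y$-type, and the shift $\Sigma$ preserves these types by the formulas for $\Sigma X_v^{(i)}$, $\Sigma Y_v^{(i)}$ and $\Sigma Z_v^{(i)}$ in Section~\ref{section_category}. Expanding $(\eta^{(q_1)}\cdot\eta'^{(q_2)})_X$ as the composition of $\eta'^{(q_2)}_X$ with an appropriate shift of $\eta^{(q_1)}$ evaluated at an object of the same type as $X$, a non-zero contribution would simultaneously require $X$ to be of type $Y$ and $\eta^{(q_1)}$ to be non-zero on a $Y$-type object, which never happens. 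The identical reasoning disposes of $\eta^{(q_1)}\cdot\eta''^{(q_2)}$, completing the proposition.
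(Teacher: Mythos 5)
Your proof is correct. The paper itself offers only ``Direct calculations,'' so there is no detailed argument to compare against; your write-up is a sound filling-in of those calculations, and all three clauses check out. Your treatment of clause~(1) is worth highlighting as a genuine shortcut: rather than computing the composite $\Sigma^n \eta'^{(q_1)}_Y \circ \eta'^{(q_2)}_Y$ on each $Y_v^{(i)}$, you observe that the product lives in $\frakZ_{2n}(\Lambda)$ (respectively $\frakZ_{2n}'(\Lambda)$), which Proposition~\ref{prop_fdp} and its primed variant have already shown to vanish since $2n \neq n$; this is legitimate because those propositions are established independently of the multiplication statement. Note that a direct calculation would also work there for the same underlying reason as in your clause~(2): the composite of two degree-$2$ arrows $e''$ would have to equal an arrow of degree $4$, and the quiver $\Gamma$ of Section~\ref{section_category} has no arrows of degree exceeding $2$, so the relation rule forces the composite to be zero. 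Your clause~(3) argument correctly recognizes that the degree count fails there (the product has degree $n=2$ and $\frakZ_2(\Lambda)\neq 0$ when $(r,n,m)=(1,2,0)$) and substitutes the disjointness of supports: $\eta^{(q_1)}$ is supported on $X$-type vertices, $\eta'^{(q_2)}$ and $\eta''^{(q_2)}$ on $Y$-type vertices, and $\Sigma$ preserves type, so every composite vanishes termwise.
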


\begin{proof}
Direct calculations.
\end{proof}

\section{Infinite global dimension} \label{section_infinite}

Throughout this section we fix $(n, m) \in \bbN^2$ such that $n >
0$ and we put $\Lambda := \Lambda (n, n, m)$. In this case $\gldim
\Lambda = \infty$.

First we describe the full subcategory of $\calK^b (\proj
\Lambda)$ formed by the indecomposable objects. Thoroughly
speaking it is the full subcategory of the category described in
Section~\ref{section_category} given by the $X$-vertices. More
precisely, we have the following quiver with relations whose path
category is equivalent to the full subcategory of $\calK^b (\proj
\Lambda)$ formed by the indecomposable objects. The vertices of
this quiver are $X_v^{(i)}$ for $i \in [0, n - 1]$ and $v \in
I_i'$, where $I_i'$ is defined as in
Section~\ref{section_category}, i.e.\
\[
I_i' := \{ (a, b) \in \bbZ^2 \mid a \leq b + \delta_{i, 0} \cdot m
\}.
\]
Next, for $i \in [0, n - 1]$ and $v = (a, b) \in I_i'$ we define
$\calI_v'^{(i)}$ and $\calX_v'^{(i)}$ by
\begin{gather*}
\calI_v'^{(i)} := [a, b + \delta_{i, 0} \cdot m ] \times [b,
\infty)
\\
\intertext{and} %
\calX_v'^{(i)} := (-\infty, a + \delta_{i, n - 1} \cdot m] \times
[a, b + \delta_{i, 0} \cdot m].
\end{gather*}
Then for each $i \in [0, n - 1]$, $v \in I_i'^{(i)}$, and $u \in
\calI_v'^{(i)}$, $u \neq v$, we have an arrow $f_{v, u}'^{(i)} :
X_v^{(i)} \to X_u^{(i)}$, and for each $i \in [0, n - 1]$, $v \in
I_i'^{(i)}$, and $u \in \calX_v'^{(i)}$, we have an arrow $e_{v,
u}'^{(i)} : X_v^{(i)} \to X_u^{(i + 1)}$. Moreover, we put $f_{v,
v}'^{(i)} := \Id_{X_v^{(i)}}$ for each $i \in [0, n - 1]$ and $v
\in I_i'$. Finally, we have the following relations:
\[
f_{u, w}'^{(i)} \circ f_{v, u}'^{(i)} =
\begin{cases}
f_{v, w}'^{(i)} & w \in \calI_v'^{(i)},
\\
0 & \text{otherwise},
\end{cases}
\]
for $i \in [0, n - 1]$, $v \in I_i'$, $u \in \calI_v'^{(i)}$, and
$w \in \calI_u'^{(i)}$,
\[
e_{u, w}'^{(i)} \circ f_{v, u}'^{(i)} =
\begin{cases}
e_{v, w}'^{(i)} & w \in \calX_v'^{(i)},
\\
0 & \text{otherwise},
\end{cases}
\]
for $i \in [0, n - 1]$, $v \in I_i'$, $u \in \calI_v'^{(i)}$, and
$w \in \calX_u'^{(i)}$,
\[
f_{u, w}'^{(i + 1)} \circ e_{v, u}'^{(i)} =
\begin{cases}
e_{v, w}'^{(i)} & w \in \calX_v'^{(i)},
\\
0 & \text{otherwise},
\end{cases}
\]
for $i \in [0, n - 1]$, $v \in I_i'$, $u \in \calX_v'^{(i)}$, and
$w \in \calI_u'^{(i + 1)}$, and
\[
e_{u, w}'^{(i + 1)} \circ e_{v, u}'^{(i)} = 0
\]
for $i \in [0, n - 1]$, $v \in I_i'$, $u \in \calX_v'^{(i)}$, and
$w \in \calX_u'^{(i + 1)}$.

The descriptions of $\Sigma$ and $\tau$ are also analogous to
these given in Section~\ref{section_category}. Namely, if $i \in
[0, n - 1]$, then $\Sigma X_v^{(i)} = X_{v + \bv_i'}^{(i + 1)}$
and $\tau X_v^{(i)} = X_{v - (1, 1)}^{(i)}$ for each $v \in I_i'$,
where $\bv_i' := (1 + \delta_{i, n - 1} \cdot m, 1 + \delta_{i, 0}
\cdot m)$. Consequently, there exists an indecomposable object $X$
in $\calK^b (\proj \Lambda)$ such that $\tau X = \Sigma^p X$ for
some $p \in \bbZ$ if and only if $n = 1$ and $m = 0$.

First we observe that Lemma~\ref{lemma_Xp} is valid in this case
without any changes. Namely, we have the following.

\begin{lemma}
Let $i \in [0, n - 1]$ and $v = (a, b) \in I_i'$. If $p \in
\bbN_+$, then
\begin{multline*}
\Hom_{\calK^b (\proj \Lambda)} (X_v^{(i)}, \Sigma^p X_v^{(i)}) =
\\
\begin{cases}
\bbF \cdot f_{v, v + \frac{p}{n} (n + m, n + m)}'^{(i)} & \text{$n
\mid p$ and $\frac{p}{n} \leq \frac{b + \delta_{i, 0} m - a}{n +
m}$},
\\
0 & \text{otherwise}.
\end{cases}
\end{multline*}
Moreover,
\[
\Hom_{\calK^b (\proj \Lambda)} (X_v^{(i)}, X_v^{(i)}) =
\begin{cases}
\bbF \cdot f_{v, v}'^{(i)} + \bbF \cdot e_{v, v}'^{(i)} & \text{$n
= 1$ and $a \leq b$}.
\\
\bbF \cdot f_{v, v}'^{(i)} & \text{otherwise}.
\\
\end{cases}
\]
\end{lemma}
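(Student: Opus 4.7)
The plan is to mimic the strategy of Lemma~\ref{lemma_Xp} in the simpler setting where the relevant full subcategory of $\calK^b (\proj \Lambda)$ has only $X$-vertices, connected solely by $f$-arrows (degree $0$) and $e$-arrows (degree $2$). Iterating the shift formula $\Sigma X_v^{(i)} = X_{v + \bv_i'}^{(i+1)}$ (with upper index read modulo $n$) yields $\Sigma^p X_v^{(i)} = X_{v + w_p}^{((i+p) \bmod n)}$, where $w_p := \sum_{j=0}^{p-1} \bv_{(i+j) \bmod n}'$. Since each full cycle contributes exactly one extra $m$ to each coordinate, $\sum_{j=0}^{n-1} \bv_j' = (n+m, n+m)$, so writing $p = qn + s$ with $s \in [0, n-1]$ gives $w_p = q(n+m, n+m) + \sum_{j=0}^{s-1} \bv_{(i+j) \bmod n}'$.

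The key structural observation is that every morphism between $X$-vertices collapses to a single quiver arrow or to zero. Pairwise, the stated relations yield: a composition of two $f$-arrows is an $f$-arrow or zero, a mixed composition is an $e$-arrow or zero, and a composition of two $e$-arrows is always zero (there being no degree-$4$ arrow to absorb them). Iterating this reduction, any path collapses to one of these three forms. Consequently $\Hom_{\calK^b (\proj \Lambda)} (X_v^{(i)}, X_u^{(j)})$ vanishes unless $j \equiv i$ or $j \equiv i + 1 \pmod{n}$, and in particular $\Hom_{\calK^b (\proj \Lambda)} (X_v^{(i)}, \Sigma^p X_v^{(i)}) = 0$ whenever $p \not\equiv 0, 1 \pmod{n}$.

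The remaining cases amount to inequality checks. If $n \mid p$ with $q := p/n \geq 1$, the target is $X_{v + q(n+m, n+m)}^{(i)}$, and the basis element $f_{v, v + q(n+m, n+m)}'^{(i)}$ exists iff $v + q(n+m, n+m) \in \calI_v'^{(i)}$, equivalently $q \leq (b + \delta_{i, 0} m - a)/(n+m)$. If $n \mid p - 1$ with $q := (p-1)/n \geq 0$, then $w_p = q(n+m, n+m) + \bv_i'$, and the first coordinate $a + q(n+m) + 1 + \delta_{i, n-1} m$ of $v + w_p$ strictly exceeds $a + \delta_{i, n-1} m$ (since $n+m \geq 1$), so $v + w_p \notin \calX_v'^{(i)}$ and no $e$-arrow exists, yielding $\Hom = 0$. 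The ``Moreover'' part ($p = 0$) is direct: $\Id_{X_v^{(i)}} = f_{v, v}'^{(i)}$ always contributes, and an additional generator $e_{v, v}'^{(i)}$ is present precisely when $v \in \calX_v'^{(i)}$; this forces $i + 1 \equiv i \pmod{n}$, hence $n = 1$, whereupon the condition reduces to $a \leq b$ (the inequalities involving $m$ being automatic). The main obstacle is the careful bookkeeping of the $\delta$-adjustments at the boundary indices $i = 0$ and $i = n - 1$; once the structural observation that $e \circ e = 0$ is in hand, the remaining inequality verifications are routine.
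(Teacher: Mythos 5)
Your proposal is correct and follows essentially the same route as the paper, which proves this lemma only by reference to Lemma~\ref{lemma_Xp} and ultimately to the membership checks in the proof of Lemma~\ref{lemma_Zp}: reduce via degrees and cyclic indices to the cases $p \equiv 0, 1 \pmod{n}$, then test whether the shifted vertex lies in $\calI_v'^{(i)}$ resp.\ $\calX_v'^{(i)}$. One small point of care: for $n = 1$ the two congruence cases overlap, so there the Hom space is the span of whichever of the (at most one) $f$-arrow and (at most one) $e$-arrow survive your two checks, rather than being settled by either case alone.
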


The description of $\frakZ_0 (\Lambda) = \frakZ_0' (\Lambda)$ does
not differ either. Namely, let $\Id$ denote the natural
transformation $\Id \to \Id$ which associates to an object $X$ in
$\calK^b (\proj \Lambda)$ the identity map. Moreover, if $n = 1$
and $m = 0$, then for $q \in \bbN$ we define the natural
transformation $\eta^{(q)} : \Id \to \Id$ by setting $\eta_X^{(q)}
:= e_{v, v}'^{(0)}$ if $X = X_v^{(0)}$ for $v = (a, b) \in I_0'$
such that $b - a = q$, and $\eta_X^{(q)} := 0$ if $X$ is an
indecomposable object of $\calK^b (\proj \Lambda)$ not isomorphic
to $X_v^{(0)}$ for some $v = (a, b) \in I_i'$ such that $b - a =
q$.

The proof of the following fact is obtained by adapting the
arguments from the proof of Proposition~\ref{prop_zzero} to the
considered case.

\begin{proposition}
We have
\[
\frakZ_0 (\Lambda) =
\begin{cases}
\bbF \cdot \Id \oplus \prod_{q \in \bbN} \bbF \cdot \eta^{(q)} &
\text{$n = 1$},
\\
\bbF \cdot \Id & \text{otherwise}.
\end{cases}
\]
\end{proposition}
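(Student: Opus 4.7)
The plan is to adapt the proof of Proposition~\ref{prop_zzero} to the simpler setting here, where only the $X$-vertices appear among the indecomposables. Fix $\eta \in \frakZ_0 (\Lambda)$. By the preceding lemma, for each $i \in [0, n - 1]$ and $v = (a, b) \in I_i'$ there are scalars $\lambda_v^{(i)}, \mu_v^{(i)} \in \bbF$ with
\[
\eta_{X_v^{(i)}} = \lambda_v^{(i)} \cdot f_{v, v}'^{(i)} + \mu_v^{(i)} \cdot e_{v, v}'^{(i)},
\]
where the second term is present only when $n = 1$ and $a \leq b$. Put $\lambda := \lambda_{(0, 0)}^{(0)}$.

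First I would show $\lambda_v^{(i)} = \lambda$ for every $(i, v)$, copying the propagation strategy of Proposition~\ref{prop_zzero}: naturality of $\eta$ against the arrows $f_{v, u}'^{(i)}$ for $u \in \calI_v'^{(i)}$ spreads the equality within each layer (a common downstream vertex in $\calI_v'^{(i)} \cap \calI_u'^{(i)}$ bridges pairs not directly joined), and naturality against $e_{v, u}'^{(i)}$ transports it between consecutive layers. For $n > 1$ no $e$-endomorphism of $X_v^{(i)}$ exists, so this already yields $\eta = \lambda \cdot \Id$ and hence $\frakZ_0 (\Lambda) = \bbF \cdot \Id$.

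For $n = 1$ I still need to analyze $\mu_v := \mu_v^{(0)}$, defined for $v = (a, b)$ with $a \leq b$. Since $\Sigma X_v^{(0)} = X_{v + \bv_0'}^{(0)}$ with $\bv_0' = (1 + m, 1 + m)$, the equivariance $\eta_{\Sigma X} = \Sigma \eta_X$ forces $\mu_v$ to depend only on $q := b - a$, so $\mu_v = \mu_{(0, q)}$. When $m = 0$ these scalars are free: any family $(\mu_{(0, q)})_{q \in \bbN}$ assembles into the natural transformation $\sum_q \mu_{(0, q)} \cdot \eta^{(q)}$, giving the summand $\prod_{q \in \bbN} \bbF \cdot \eta^{(q)}$.

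The main obstacle is the case $m > 0$, where every $\mu_{(0, q)}$ must be forced to vanish. I would proceed in two naturality steps. First, against $f_{(0, 0), (m, 0)}'^{(0)}$: the vertex $(m, 0)$ has $b < a$, so its $\mu$-component vanishes automatically, while $(m, 0) \in \calX_{(0, 0)}'^{(0)}$ (this crucially uses $m > 0$); the relation $f_{(0, 0), (m, 0)}'^{(0)} \circ e_{(0, 0), (0, 0)}'^{(0)} = e_{(0, 0), (m, 0)}'^{(0)}$ then forces $\mu_{(0, 0)} = 0$. Second, for each $q \geq 0$, against $f_{(0, q), (0, q + 1)}'^{(0)}$: since $(0, q + 1) \in \calX_{(0, q)}'^{(0)}$ (again using $m > 0$), the analogous composition identities on both sides yield $\mu_{(0, q + 1)} = \mu_{(0, q)}$, so induction from the base case gives $\mu_{(0, q)} = 0$ for all $q \geq 0$, completing the proof.
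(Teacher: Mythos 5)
Your proposal takes the same route as the paper, whose ``proof'' here is only the instruction to adapt the argument of Proposition~\ref{prop_zzero}; your two naturality computations (against $f_{(0,0),(m,0)}'^{(0)}$ to kill $\mu_{(0,0)}$, and against $f_{(0,q),(0,q+1)}'^{(0)}$ to propagate along a diagonal) are correct and are exactly the intended adaptation. There is, however, one step that fails as written: the claim that $\Sigma$-equivariance forces $\mu_v$ to depend only on $q = b - a$. Since $\Sigma X_v^{(0)} = X_{v + (1+m,\, 1+m)}^{(0)}$, equivariance in degree $0$ only identifies $\mu_{(a,b)}$ with $\mu_{(a + k(1+m),\, b + k(1+m))}$ for $k \in \bbZ$; when $m > 0$ a fixed diagonal $b - a = q$ splits into $m + 1$ distinct $\Sigma$-orbits, so for instance $(0, q)$ and $(1, q+1)$ are \emph{not} related by a power of $\Sigma$. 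Hence showing $\mu_{(0,q)} = 0$ for all $q$ does not yet kill $\mu_{(1, q+1)}$. The gap is easily closed: your two naturality steps work verbatim from an arbitrary base point $(a, a)$ — the arrow $f_{(a,a),(a+m,a)}'^{(0)}$ exists, $(a+m, a)$ carries no $e$-loop when $m > 0$, and $(a, b+1) \in \calX_{(a,b)}'^{(0)}$ whenever $m \geq 1$ — so one simply runs the induction on $b - a$ separately for each value of $a$ (equivalently, adds the naturality relation against $f_{v, v+(1,1)}'^{(0)}$, which is also available only because $m > 0$). The $m = 0$ case, where the $\Sigma$-orbit does exhaust the diagonal, is the only place your equivariance argument is needed, and there it is correct.

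Separately, note that what you prove is that the extra summand $\prod_{q \in \bbN} \bbF \cdot \eta^{(q)}$ occurs precisely when $n = 1$ \emph{and} $m = 0$, whereas the displayed statement says only ``$n = 1$''. Your version is the right one: the transformations $\eta^{(q)}$ are defined in this section only for $n = 1$ and $m = 0$, and the main theorem gives $R \simeq \bbF[X]$ or $\bbF[X^2]$ (hence $R_0 = \bbF$) for $r = n = 1$ and $m > 0$. The condition in the statement should be read as ``$n = 1$ and $m = 0$''.
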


The situation differs in positive degrees.

We define $\eta : \Id \to \Sigma^n$ in the following way. We put
$\eta_X := f_{v, v + (n + m, n + m)}'^{(i)}$ if $X = X_v^{(i)}$
for $i \in [0, n - 1]$ and $v = (a, b) \in I_i'$ such that $n + m
\leq b + \delta_{i, 0} \cdot m - a$, and $\eta_X := 0$ if $X$ is
an indecomposable object of $\calK^b (\proj \Lambda)$ not
isomorphic to $X_v^{(i)}$ for some $i \in [0, n - 1]$ and $v = (a,
b) \in I_i'$ such that $n + m \leq b + \delta_{i, 0} \cdot m - a$.
Observe that $\eta^p \neq 0$ for each $p \in \bbN_+$. Moreover,
$\eta^p \in \frakZ_{p \cdot n}' (\Lambda)$ for each $p \in
\bbN_+$. More precisely, $\eta_X^p = f_{v, v + p \cdot (n + m, n +
m)}$ if $X = X_v^{(i)}$ for $i \in [0, n - 1]$ and $v = (a, b) \in
I_i'$ such that $p \cdot (n + m) \leq b + \delta_{i, 0} \cdot m -
a$, and $\eta_X := 0$ if $X$ is an indecomposable object of
$\calK^b (\proj \Lambda)$ not isomorphic to $X_v^{(i)}$ for some
$i \in [0, n - 1]$ and $v = (a, b) \in I_i'$ such that $p \cdot (n
+ m) \leq b + \delta_{i, 0} \cdot m - a$. Finally, if $p \in
\bbN_+$, then $\eta^p \in \frakZ_{p \cdot n} (\Lambda)$ if and
only if either $2 \mid p \cdot n$ or $\chr \bbF = 2$.

We have the following.

\begin{proposition}
Let $p \in \bbN_+$. Then
\[
\frakZ_p' (\Lambda) =
\begin{cases}
\bbF \cdot \eta^{\frac{p}{n}} & \text{$n \mid p$},
\\
0 & \text{otherwise}.
\end{cases}
\]
Moreover,
\[
\frakZ_p (\Lambda) =
\begin{cases}
\frakZ_p' (\Lambda) & \text{either $2 \mid p$ or $\chr \bbF = 2$},
\\
0 & \text{otherwise}.
\end{cases}
\]
\end{proposition}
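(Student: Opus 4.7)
Fix $p \in \bbN_+$ and $\eta$ in $\frakZ_p(\Lambda)$ or $\frakZ_p'(\Lambda)$. By Lemma~\ref{lemma_Xp} (which, as observed at the beginning of this section, applies verbatim here), $\Hom_{\calK^b(\proj \Lambda)}(X_v^{(i)}, \Sigma^p X_v^{(i)})$ vanishes unless $n \mid p$ and $v = (a, b)$ satisfies the range condition $\frac{p}{n} \leq \frac{b + \delta_{i, 0} m - a}{n + m}$; in that case it is one-dimensional, spanned by $f_{v, v + \frac{p}{n}(n + m, n + m)}'^{(i)}$. Writing $p = q n$ (and noting that otherwise $\eta = 0$), we have $\eta_{X_v^{(i)}} = \lambda_{i, v} \cdot f_{v, v + q(n + m, n + m)}'^{(i)}$ for unique scalars $\lambda_{i, v} \in \bbF$, forced to vanish when $v$ is out of range.

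Set $\epsilon := 1$ for $\eta \in \frakZ_p'(\Lambda)$ and $\epsilon := (-1)^p$ for $\eta \in \frakZ_p(\Lambda)$. The suspension compatibility $\eta_{\Sigma X} = \epsilon \cdot \Sigma \eta_X$ applied to $X = X_v^{(i)}$ gives the recursion $\lambda_{i + 1, v + \bv_i'} = \epsilon \cdot \lambda_{i, v}$; iterating $n$ times and using $\sum_j \bv_j' = (n + m, n + m)$ yields $\lambda_{i, v + k(n + m, n + m)} = \epsilon^{k n} \cdot \lambda_{i, v}$ for every $k \in \bbZ$.

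The propagation step, modeled on the proof of Proposition~\ref{prop_zzero}, uses naturality on the $f$-arrows. Naturality applied to $f_{v, u}'^{(i)}$, in cases where both $u$ and $u + q(n + m, n + m)$ lie in $\calI_v'^{(i)}$, gives $\lambda_{i, u} = \lambda_{i, v}$. A chain of such moves starting from $v = (a, b)$ (first vertically to $(a, L)$ with $L$ large, then horizontally) reaches every in-range vertex $(c, d)$ with $c \geq a$. Combining with the $\Sigma$-iteration from the previous paragraph shifts the first coordinate arbitrarily downward, so every in-range $\lambda_{i, v}$ is forced to equal $\epsilon^{k n} \cdot \lambda$ for a single base scalar $\lambda := \lambda_{i_0, v_0}$ and some path-dependent integer $k$. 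Well-definedness of this assignment requires $\epsilon^n = 1$ in $\bbF$.

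For $\eta \in \frakZ_p'(\Lambda)$ the condition $\epsilon^n = 1$ is automatic, and every such $\eta$ is a scalar multiple of $\eta^{p/n}$, giving $\frakZ_p'(\Lambda) = \bbF \cdot \eta^{p/n}$. For $\eta \in \frakZ_p(\Lambda)$ the condition becomes $(-1)^{p n} = 1$ in $\bbF$, which (given $n \mid p$) is equivalent to $2 \mid p$ or $\chr \bbF = 2$; in the affirmative case $\frakZ_p(\Lambda) = \frakZ_p'(\Lambda)$, while otherwise $\lambda$ must vanish and $\frakZ_p(\Lambda) = 0$. The main obstacle is verifying, in the propagation step, that every in-range vertex can be reached from the basepoint while faithfully tracking the accumulated factor $\epsilon^{k n}$; the consistency of this assignment across competing paths is exactly what produces the sign constraint $\epsilon^n = 1$ and accounts for the $\chr \bbF = 2$ clause.
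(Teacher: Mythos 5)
Your argument is correct and is exactly the kind of proof the paper has in mind: the paper omits this proof entirely, remarking only that it ``follows by arguments similar to those used before,'' and you have correctly instantiated those arguments --- the $\Hom$-space computation from the section's lemma, propagation of the scalars along $f$-arrows as in the proofs of Propositions~\ref{prop_fdp} and~\ref{prop_zzero}, and the $\Sigma^n$-compatibility forcing $\epsilon^n=1$, with $(-1)^{pn}=1$ reducing to $2\mid p$ or $\chr\bbF=2$ once $n\mid p$. The only slight imprecision is in the propagation step: from $(a,L)$ one cannot map down to $(c,d)$ with $d<L$, so one should connect $(a,b)$ and $(c,d)$ through a common high vertex $(c,L)$ (using the arrows $(a,b)\to(a,L)\to(c,L)$ and $(c,d)\to(c,L)$, the resulting equalities being symmetric), which is the same wedge trick the paper uses via $\calI_v^{(i)}\cap\calI_u^{(i)}$ in Proposition~\ref{prop_zzero}.
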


\begin{proof}
The claim follows by arguments similar to those used before, hence
we omit it (compare also the proof
of~\cite{KrauseYe2008}*{Lemma~5.3}).
\end{proof}

We finish this section with the following.

\begin{proposition}
Let $q, q_1, q_2 \in \bbN$. If $n = 1$, then
\[
\eta \cdot \eta^{(q)} = 0 = \eta^{(q_1)} \cdot \eta^{(q_2)}.
\]
\end{proposition}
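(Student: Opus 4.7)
The plan is to unpack both products via the graded product formula, evaluate them on each indecomposable, and reduce everything to the path-category relations written out at the start of Section~\ref{section_infinite}. Since $n=1$, the only cyclic index is $i=0$ and every $(i+1)$-upper-index wraps back to $0$, so the only indecomposables in $\calK^b(\proj\Lambda)$ are the $X_v^{(0)}$ with $v\in I_0'$. For $\eta^{(q)}$ to be defined one also needs $m=0$, which is the case I treat; under these assumptions $\eta^{(q)}_X=e_{v,v}'^{(0)}$ precisely when $X=X_v^{(0)}$ with $v=(a,b)$ satisfying $b-a=q$, and vanishes otherwise, while $\eta_X=f_{v,v+(1,1)}'^{(0)}$ precisely when $b-a\geq 1$, and vanishes otherwise.

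For $\eta^{(q_1)}\cdot\eta^{(q_2)}$, both factors sit in degree zero, so the graded product formula collapses to $(\eta^{(q_1)}\cdot\eta^{(q_2)})_X=\eta^{(q_1)}_X\circ\eta^{(q_2)}_X$ with no suspension involved. This is nonzero only if both $\eta^{(q_1)}_X$ and $\eta^{(q_2)}_X$ are nonzero, which forces $q_1=q_2=b-a$; in that case the composition equals $e_{v,v}'^{(0)}\circ e_{v,v}'^{(0)}$, which is the $i=0$, $u=v=w$ instance of the relation $e_{u,w}'^{(i+1)}\circ e_{v,u}'^{(i)}=0$, and therefore vanishes.

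For $\eta\cdot\eta^{(q)}$, since $\eta^{(q)}$ again has degree zero the product formula reads $(\eta\cdot\eta^{(q)})_X=\eta_X\circ\eta^{(q)}_X$. This can only be nonzero at $X=X_v^{(0)}$ with $b-a=q\geq 1$, where the composition is $f_{v,v+(1,1)}'^{(0)}\circ e_{v,v}'^{(0)}$. Applying the relation $f_{u,w}'^{(i+1)}\circ e_{v,u}'^{(i)}=e_{v,w}'^{(i)}$ if $w\in\calX_v'^{(i)}$ and $0$ otherwise, at $i=0$, $u=v$ and $w=v+(1,1)=(a+1,b+1)$, one notes that $\calX_v'^{(0)}=(-\infty,a]\times[a,b]$ when $m=0$, and since $a+1>a$ we have $w\notin\calX_v'^{(0)}$, so the composition is zero.

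I do not anticipate any serious obstacle: once the values of $\eta$ and $\eta^{(q)}$ are substituted and one remembers that $n=1$ collapses the cyclic index, both compositions land directly on relations already listed. The only care-requiring point is the graded product convention, and since both products here contain a degree-zero factor no shift of natural transformations enters; with that observation in place the argument is essentially mechanical.
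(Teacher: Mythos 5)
Your proof is correct and is precisely the content of the paper's proof, which consists of the single phrase ``Direct calculations''; you have simply carried those calculations out, correctly identifying that the degree-zero factor trivializes the suspension in the product formula and that both compositions land on the listed relations ($e\circ e=0$, and $f\circ e=0$ because $(a+1,b+1)\notin\calX_v'^{(0)}$). No further comment is needed.
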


\begin{proof}
Direct calculations.
\end{proof}

\section{Main theorem} \label{sect_main}

Throughout this section we fix $(r, n, m) \in \Omega$ and we put
$\Lambda := \Lambda (r, n, m)$. We summarize our findings in the
theorems describing $\frakZ (\Lambda)$ and $\frakZ' (\Lambda)$.
First we introduce some additional notation.

Let $R$ be a commutative ring graded by $\bbZ$ and $M$ a graded
$R$-module. We put
\[
T (R, M) := \left\{
\begin{bmatrix}
r & m \\ 0 & r
\end{bmatrix}
\mid \text{$r \in R$ and $m \in M$} \right\}.
\]
If we endow $T (R, M)$ with the obvious matrix multiplication,
then it becomes a graded ring, with the grading inherited from
those in $R$ and $M$. Moreover, for $p \in \bbZ$ we define the
graded $R$-module $M [p]$ by $M [p]_q := M [p + q]$ for $q \in
\bbZ$.

We view $\bbF^{\bbN} := \prod_{q \in \bbN} \bbF$ as a graded
$\bbF$-module concentrated in degree $0$. Consequently, if $p \in
\bbN$, then the morphism $\bbF [X^p] \to \bbF$, $f \mapsto f (0)$,
gives $\bbF^{\bbN}$ a structure of a graded $\bbF [X^p]$-module,
where in $\bbF [X^p]$ we have the grading coming from the usual
degree of polynomials.

We have the following description of the graded center of
$\Lambda$.

\begin{theorem}
Let $(r, n, m) \in \Omega$ and $R := \frakZ (\Lambda (r, n, m))$.
Then
\[
R \simeq
\begin{cases}
T (\bbF [X], \bbF^{\bbN}) & \text{$(r, n, m) = (1, 1, 0)$ and
$\chr \bbF = 2$},
\\
T (\bbF [X^2], \bbF^{\bbN}) & \text{$(r, n, m) = (1, 1, 0)$ and
$\chr \bbF \neq 2$},
\\
\bbF [X^n] & \text{$r = n$, $(r, m) \neq (1, 0)$, and $2 \mid n
\cdot \chr \bbF$},
\\
\bbF [X^{2 n}] & \text{$r = n$, $(r, m) \neq (1, 0)$, and $2 \nmid
n \cdot \chr \bbF$},
\\
T (\bbF, \bbF^{\bbN} \oplus \bbF^{\bbN} [-n]) & \text{$(r, n, m) =
(1, 2, 0)$},
\\
T (\bbF, \bbF^{\bbN} [-n]) & \text{$(r, m) \neq (1, 0)$ and $r = n
- 1$},
\\
T (\bbF, \bbF^{\bbN}) & \text{$(r, m) = (1, 0)$ and $r \neq n - 1,
n$},
\\
\bbF & \text{otherwise}.
\end{cases}
\]
In particular, $R_{\red} \neq \bbF$ if and only if $r = n$, and
$R_{\nil} \neq 0$ if and only if either $r = n - 1$ and $r = 1$
and $m = 0$.
\end{theorem}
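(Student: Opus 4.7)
The plan is to read off the ring $\frakZ(\Lambda)$ case by case from the degree-wise descriptions established in Sections~\ref{sect_calcul} and~\ref{section_infinite}, together with the vanishing multiplication rules stated there. I would partition $\Omega$ according to two binary dichotomies: whether $\gldim \Lambda < \infty$ (equivalent to $r < n$, handled in Section~\ref{sect_calcul}) or $r = n$ (Section~\ref{section_infinite}), and then whether the exceptional configurations $r = n - 1$ and $(r, m) = (1, 0)$ occur. Their intersections produce the eight listed cases, and in each the generators and relations have already been recorded.

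For $r < n$, Proposition~\ref{prop_fdp} (together with its primed analogue) gives $\frakZ_p(\Lambda) = 0$ for $p \geq 1$ unless $(r, p) = (n - 1, n)$, where it equals $\prod_q \bbF \cdot \eta'^{(q)}$, and Proposition~\ref{prop_zzero} gives $\frakZ_0(\Lambda) = \bbF \cdot \Id$, augmented by $\prod_q \bbF \cdot \eta^{(q)}$ exactly when $(r, m) = (1, 0)$. The final proposition of Section~\ref{sect_calcul} forces all products among the $\eta^{(q)}$ and $\eta'^{(q)}$ to vanish, so their span is an ideal squaring to zero; the whole ring is therefore a trivial extension $T(\bbF, M)$ where $M$ is $0$, $\bbF^{\bbN}$, $\bbF^{\bbN}[-n]$, or $\bbF^{\bbN} \oplus \bbF^{\bbN}[-n]$, depending on which of the two exceptional conditions hold. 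This covers cases (5)--(8).

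For $r = n$, the element $\eta \in \frakZ_n'(\Lambda)$ built in Section~\ref{section_infinite} satisfies $\eta^p \neq 0$ for every $p \geq 1$, and the positive-degree proposition of that section identifies $\bbF[\eta] \simeq \frakZ'(\Lambda)_{>0}$; the graded-commutativity constraint then restricts $\frakZ(\Lambda)_{>0}$ to $\bbF[\eta]$ or $\bbF[\eta^2]$ according as $2 \mid n \cdot \chr \bbF$ or not, giving $\bbF[X^n]$ or $\bbF[X^{2 n}]$. When $(r, m) \neq (1, 0)$ the degree-zero part is just $\bbF \cdot \Id$, producing cases~(3) and~(4). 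In the remaining subcase $(r, n, m) = (1, 1, 0)$, the degree-zero nilpotents $\eta^{(q)}$ also appear, and the last proposition of Section~\ref{section_infinite} shows $\eta \cdot \eta^{(q)} = 0 = \eta^{(q_1)} \eta^{(q_2)}$, so the $\eta^{(q)}$'s form an ideal squaring to zero on which the positive-degree polynomial variable acts by evaluation at $0$; this is exactly the trivial extension $T(\bbF[X^k], \bbF^{\bbN})$ with $k = 1$ or $2$, yielding cases~(1) and~(2).

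The two closing statements then follow by inspection: $\eta$ and its powers are non-nilpotent while every $\eta^{(q)}$ and $\eta'^{(q)}$ squares to zero, so $R_{\red} \neq \bbF$ iff $\eta$ is present (i.e.\ $r = n$) and $R_{\nil} \neq 0$ iff at least one $\eta^{(q)}$ or $\eta'^{(q)}$ is present (i.e.\ $r = n - 1$ or $(r, m) = (1, 0)$). I expect the main obstacle to be the mixed regime $(r, n, m) = (1, 2, 0)$, where the degree-$0$ nilpotents $\eta^{(q)}$ and the degree-$n$ nilpotents $\eta'^{(q)}$ coexist: one must check that all cross products $\eta^{(q_1)} \cdot \eta'^{(q_2)}$ vanish before the two families may be packaged into the single graded $\bbF$-module $\bbF^{\bbN} \oplus \bbF^{\bbN}[-n]$, and this is precisely what part~(3) of the last proposition of Section~\ref{sect_calcul} delivers.
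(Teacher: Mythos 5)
Your proposal is correct and follows essentially the same route as the paper, which offers no separate argument for this theorem but simply assembles it from Propositions \ref{prop_fdp} and \ref{prop_zzero}, their analogues and the multiplication propositions of Sections \ref{sect_calcul} and \ref{section_infinite}; your case partition by $r<n$ versus $r=n$ and by the exceptional configurations $r=n-1$ and $(r,m)=(1,0)$ reproduces the eight cases exactly. You also correctly single out the one point the summary silently relies on, namely the vanishing of the cross products $\eta^{(q_1)}\cdot\eta'^{(q_2)}$ in the case $(r,n,m)=(1,2,0)$.
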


Let $(r, n, m) \in \Omega$. Then $\gldim \Lambda (r, n, m) =
\infty$ if and only if $r = n$. Moreover, there exists an object
$X$ in $\calD^b (\Lambda (r, n, m))$ such that $\tau X \simeq
\Sigma^p X$ for some $p \in \bbZ$ if and only if either $r = n -
1$ and $r = 1$ and $m = 0$. Consequently, the above theorem
implies the main theorem of the paper.

The following theorem is the analogue of the above one for ``the
commutative version'' of the graded center.

\begin{theorem}
Let $(r, n, m) \in \Omega$ and $R := \frakZ' (\Lambda (r, n, m))$.
Then
\[
R \simeq
\begin{cases}
T (\bbF [X], \bbF^{\bbN}) & \text{$(r, n, m) = (1, 1, 0)$},
\\
\bbF [X^n] & \text{$r = n$ and $(r, m) \neq (1, 0)$},
\\
T (\bbF, \bbF^{\bbN} \oplus \bbF^{\bbN} [-n]) & \text{$(r, n, m) =
(1, 2, 0)$},
\\
T (\bbF, \bbF^{\bbN} [-n]) & \text{$(r, m) \neq (1, 0)$ and $r = n
- 1$},
\\
T (\bbF, \bbF^{\bbN}) & \text{$(r, m) = (1, 0)$ and $r \neq n - 1,
n$},
\\
\bbF & \text{otherwise}.
\end{cases}
\]
\end{theorem}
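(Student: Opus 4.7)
The plan is to assemble the graded ring $\frakZ'(\Lambda)$ degree by degree from the calculations already performed and to identify the resulting structure as a trivial extension.

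The first step is a case split on whether $r < n$ (Sections~\ref{section_category}--\ref{sect_calcul}) or $r = n$ (Section~\ref{section_infinite}), and then on the on/off flags $(r,m) = (1,0)$ (which contributes a $\prod_q \bbF \cdot \eta^{(q)}$ factor in degree $0$) and $r = n - 1$ (which contributes a $\prod_q \bbF \cdot \eta''^{(q)}$ factor in degree $n$). Proposition~\ref{prop_zzero} supplies the degree-$0$ part in the $r < n$ branch, and its analogue in Section~\ref{section_infinite} handles $r = n$. The primed analogue of Proposition~\ref{prop_fdp} supplies the positive degrees when $r < n$, while the primed proposition of Section~\ref{section_infinite} yields the one-dimensional $\bbF \cdot \eta^{p/n}$ in each degree $p$ divisible by $n$ when $r = n$. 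Enumerating the four on/off combinations in each branch produces exactly the seven rows of the table as graded $\bbF$-vector spaces.

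The second step is to recognise the multiplication. In every row, the classes generated by $\eta^{(q)}$ and $\eta''^{(q)}$ together span a square-zero graded ideal $M$: the propositions at the end of Sections~\ref{sect_calcul} and~\ref{section_infinite} verify that all pairwise products among these families vanish, and in the overlapping case $(r,n,m) = (1,2,0)$ the cross product $\eta^{(q_1)} \cdot \eta''^{(q_2)}$ vanishes as well. On the complementary subring, $\Id$ is the unit; in the $r = n$ branch the element $\eta$ has powers $\eta^k$ in each degree $kn$ with no relations, giving a polynomial subring $\bbF[X^n]$, while in the finite global dimension rows the complement is just $\bbF \cdot \Id$. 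The only non-trivial module action to record is in the $(1,1,0)$ row: since $\eta \cdot \eta^{(q)} = 0$, the action of $\bbF[X]$ on $\bbF^{\bbN}$ is through the augmentation $X \mapsto 0$, which is exactly the $\bbF[X]$-module structure defined just before the theorem. Assembling $R_0$ together with the square-zero ideal $M$ as the trivial extension $T(R_0, M)$ then reads off each row of the table.

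The main obstacle is organisational rather than technical: one must keep track of the degree shift $[-n]$ whenever the nilpotent generators sit in positive degree (rows~3 and~4), and in the doubly degenerate case $(1, 2, 0)$ verify that the sum $\bbF^{\bbN} \oplus \bbF^{\bbN}[-n]$ of the two contributions stays square-zero, which is secured by item~(iii) of the multiplication proposition in Section~\ref{sect_calcul}. With those bookkeeping points settled, the identification claimed in the theorem follows directly from the per-degree and multiplication results already established.
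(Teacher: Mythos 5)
Your proposal is correct and matches the paper's (implicit) argument: the theorem is stated as a summary of Proposition~\ref{prop_zzero}, the primed analogue of Proposition~\ref{prop_fdp}, and the corresponding propositions of Section~\ref{section_infinite}, together with the vanishing of products among the $\eta^{(q)}$, $\eta''^{(q)}$, and $\eta$, which is exactly the assembly you carry out. Your identification of the square-zero ideal and of the $\bbF[X]$-action on $\bbF^{\bbN}$ through evaluation at $0$ in the $(1,1,0)$ case is precisely the intended reading of the trivial-extension notation $T(R,M)$ introduced before the theorem.
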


\bibsection

\begin{biblist}

\bib{AssemSimsonSkowronski2006}{book}{
   author={Assem, I.},
   author={Simson, D.},
   author={Skowro{\'n}ski, A.},
   title={Elements of the Representation Theory of Associative Algebras. Vol. 1},
   series={London Mathematical Society Student Texts},
   volume={65},
   publisher={Cambridge University Press},
   place={Cambridge},
   date={2006},
   pages={x+458},
}

\bib{AvellaAlaminosGeiss2008}{article}{
   author={Avella-Alaminos, D.},
   author={Gei{\ss}, Ch.},
   title={Combinatorial derived invariants for gentle algebras},
   journal={J. Pure Appl. Algebra},
   volume={212},
   date={2008},
   number={1},
   pages={228--243},
   issn={0022-4049},
}

\bib{BekkertMerklen2003}{article}{
   author={Bekkert, V.},
   author={Merklen, H. A.},
   title={Indecomposables in derived categories of gentle algebras},
   journal={Algebr. Represent. Theory},
   volume={6},
   date={2003},
   number={3},
   pages={285--302},
   issn={1386-923X},
}

\bib{Bobinski2009}{article}{
   author={Bobi{\'n}ski, G.},
   title={The almost split triangles for perfect complexes over gentle algebras},
   eprint={arXiv:0903.5140},
}

\bib{BobinskiGeissSkowronski2004}{article}{
   author={Bobi{\'n}ski, G.},
   author={Geiss, Ch.},
   author={Skowro{\'n}ski, A.},
   title={Classification of discrete derived categories},
   journal={Cent. Eur. J. Math.},
   volume={2},
   date={2004},
   number={1},
   pages={19--49},
}

\bib{Happel1988}{book}{
   author={Happel, D.},
   title={Triangulated Categories in the Representation Theory of
   Finite-dimensional Algebras},
   series={London Math. Soc. Lecture Note Ser.},
   volume={119},
   publisher={Cambridge Univ. Press},
   place={Cambridge},
   date={1988},
   pages={x+208},
   isbn={0-521-33922-7},
}

\bib{KessarLinckelmann2007}{article}{
   author={Kessar, R.},
   author={Linckelmann, M.},
   title={The graded center of the stable category of a Brauer tree algebra},
   status={preprint, 2007},
}

\bib{KrauseYe2008}{article}{
   author={Krause, H.},
   author={Ye, Y.},
   title={On the center of a triangulated category},
   status={preprint, 2008},
}

\bib{Linckelmann2007}{article}{
   author={Linckelmann, M.},
   title={On graded centers and block cohomology},
   journal={Proc. Edinb. Math. Soc. (2)},
   status={to appear},
}

\bib{LinckelmannStancu2008}{article}{
   author={Linckelmann, M.},
   author={Stancu, R.},
   title={On the graded center of the stable category of a finite $p$-group},
   eprint={arXiv:0811.4626},
}

\bib{Verdier1977}{collection.article}{
   author={Verdier, J.-L.},
   title={Categories d\'eriv\'ees},
   book={
      author={Deligne, P.},
      title={Cohomologie \'Etale},
      series={Lecture Notes in Mathematics},
      volume={569},
      publisher={Springer},
      place={Berlin},
   },
      date={1977},
   pages={262-311},
}

\bib{Vossieck2001}{article}{
   author={Vossieck, D.},
   title={The algebras with discrete derived category},
   journal={J. Algebra},
   volume={243},
   date={2001},
   number={1},
   pages={168--176},
   issn={0021-8693},
}

\end{biblist}

\end{document}